
\documentclass[12pt]{amsart}
\usepackage{amsfonts}
\usepackage{amsmath}
\usepackage{amsthm}
\usepackage{amssymb}
\usepackage{graphicx}
\usepackage{amscd}
\usepackage{xcolor}
\usepackage{tikz-cd}
\usepackage{hyperref}
\usepackage[utf8]{inputenc}
\usepackage[greek,english]{babel}
\usepackage{alphabeta}
\usepackage{fancyhdr}
\usepackage{mathtools}
\usepackage[toc,page]{appendix}
\setcounter{MaxMatrixCols}{10}

\newtheorem{theorem}{Theorem}

\newtheorem{corollary}[theorem]{Corollary}

\newtheorem{lemma}[theorem]{Lemma}

\newtheorem{remark}{Remark}

\begin{document}
\subjclass[2000]{Primary 58J35, Secondary 53C35, 43A85}
\keywords{heat kernel, time derivative, symmetric spaces, gradient
estimates, locally symmetric spaces, Poincar{\'{e}} series, Littlewood-Paley-Stein operator. }
\title[Estimates of the derivatives of the heat kernel ]{The
derivatives of the heat kernel on symmetric spaces}
\dedicatory{To the memory of Professor Michel Marias.}

\author{Anestis Fotiadis}
\email{fotiadisanestis@math.auth.gr}
\author{Effie Papageorgiou}
\email{papageoe@math.auth.gr}
\curraddr{Department of Mathematics, Aristotle University of Thessaloniki,
Thessaloniki 54.124, Greece}
\date{}

\begin{abstract}
We derive estimates of the derivatives of the heat kernel on noncompact 
symmetric spaces and on locally symmetric spaces. Applying these
estimates we study the $L^{p}$-boundedness  of Littlewood-Paley-Stein
operators and the Laplacian of the heat operator on a wide class of locally symmetric spaces.
\end{abstract}

\maketitle

\section{Introduction and statement of the results}\label{sectionintroduction}
Our main objective in this article is to prove estimates of the derivatives of the heat kernel on noncompact symmetric spaces. We then obtain a variety of applications.

More than the results themselves, it is the method in the proof of our main result that is nontrivial. More specifically, we are able to estimate time derivatives of the heat kernel by combining sharp heat kernel estimates with rough estimates of its time derivatives, and by improving step by step the resulting estimates using an iterative argument. The final estimates obtained this way are precise.

A symmetric space is a homogeneous space that can be described as a coset
Riemannian manifold $X=G/K,$ where $G$ is a semisimple Lie group and $K$ is
a maximal compact subgroup. From now on, $%
X $ will denote an $n$-dimensional symmetric space.

Let $\mathfrak{g}$ and $\mathfrak{k}$ be the Lie algebras of $G$ and $K$
respectively. Let also $\mathfrak{p}$ be the subspace of $\mathfrak{g}$
which is orthogonal to $\mathfrak{k}$ with respect to the Killing form. Let $%
\mathfrak{a}$ be an abelian maximal subspace of $\mathfrak{p}$, $\mathfrak{a}%
^{\ast }$ its dual and let $\Sigma \subset \mathfrak{a}^{\ast }$ be the root
system of ($\mathfrak{g}$,$\mathfrak{a}$). Choose a set $\Sigma ^{+}$ of
positives roots. Let $\rho $ be the half-sum of positive roots counted with
multiplicity. Let $\mathfrak{a}^{+}\subset \mathfrak{a}$ be the
corresponding positive Weyl chamber and let $\overline{\mathfrak{a}_{+}}$ be
its closure. We have the Cartan decomposition $G=K(\exp \overline{\mathfrak{a%
}_{+}})K$. If $x\in G$, then it is uniquely written as $x=k_{1}\left( \exp H\right)
k_{2},$ with $k_{1},k_{2}\in K$ and $H\in \overline{\mathfrak{a}_{+}}$.

Let $\Delta $ be the Laplace-Beltrami operator on $X$. Then, the heat kernel 
$h_{t}$ of $X$ is the fundamental solution of the heat equation $\partial
_{t}h_{t}=\Delta h_{t}$. Note that the heat kernel is a $K$-bi-invariant function, i.e., if $%
x=k_{1}\left( \exp H\right) k_{2}\in X$, then $h_{t}(x)=h_{t}(\exp H)$.

Our main result is the following theorem.

\begin{theorem}
\label{Main result} If $X$ is a symmetric space of noncompact type, then for all $\epsilon\in(0,1)$ and $i\in \mathbb{N}$ there is a constant $c>0$ such that 
\begin{equation}  \label{required estimate}
\left\vert \frac{\partial ^{i}h_{t}}{\partial t^{i}}(\exp H)\right\vert \leq
ct^{-(n/2)-i}e^{-(1-\epsilon )\left( \left\Vert\rho \right\Vert^{2}t+\langle
\rho ,H\rangle +\left\Vert H\right\Vert ^{2}/(4t)\right)},
\end{equation}
$\text{ for all } t>0 \text{ and }H \in\overline{\mathfrak{a}_{+}}.$
\end{theorem}

There is a very rich and long literature concerning heat kernel estimates in various geometric contexts. See for example \cite{ANJ,ANO,DAVMAN,LIYAU}, and the references therein. In particular, Davies and Mandouvalos in \cite{DAVMAN}, obtained optimal estimates of the
heat kernel in hyperbolic spaces and Anker and Ji in \cite%
{ANJ} and Anker and Ostellari in \cite{ANO},
obtained estimates of the heat kernel in the case of symmetric spaces.
 Estimates of the time derivatives of the heat kernel have
been obtained in \cite{MANTSE} for hyperbolic spaces, and in \cite{DAV} and \cite{GR2}
on general geometric contexts. 

Next, we shall find applications of Theorem \ref{Main result}. Firstly, we obtain gradient
estimates of $h_{t}.$

\begin{corollary}
\label{Gradient estimate} If $X$ is a symmetric space of noncompact type,
then for all $\epsilon\in(0,1)$ there is a constant $c>0,$ such that 
\begin{equation}
\left\Vert \nabla h_{t}(\exp{H})\right\Vert \leq
ct^{-(n+1)/2}e^{-(1-\epsilon )\left(\left\Vert\rho \right\Vert^{2}t+\langle
\rho ,H\rangle +\| H\|^2/(4t)\right) },
\end{equation}
$\text{ for all } t>0\text{ and } H \in\overline{\mathfrak{a}_{+}}.$
\end{corollary}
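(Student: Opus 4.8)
The plan is to deduce the gradient estimate from Theorem~\ref{Main result} by relating the spatial gradient of the heat kernel to its time derivatives. The key idea is a standard trick for the heat equation: since $\partial_t h_t = \Delta h_t$, one can trade a factor of $t^{1/2}$ worth of spatial regularity for control by the first time derivative. Concretely, I would first establish the pointwise bound
\begin{equation}
\left\Vert \nabla h_t(\exp H)\right\Vert \leq c\, t^{-1/2}\left( h_{2t}(\exp H)^{1/2}\, (\partial_t h)_{2t}(\exp H)^{1/2}\right)^{1/2}
\end{equation}
or, more robustly, a reproducing-type identity $h_t = h_{t/2}\ast h_{t/2}$ together with $\nabla h_t = (\nabla h_{t/2})\ast h_{t/2}$. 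The cleanest route, however, is to invoke the integrated Bochner/Li--Yau machinery: on a manifold with Ricci curvature bounded below (which holds on a symmetric space of noncompact type, where the curvature is bounded), one has a Cheeger--Yau or local gradient estimate of the form $\|\nabla h_t(x)\|^2 \leq C\big( h_t(x)\,|\partial_t h_t(x)| + t^{-1} h_t(x)^2 \big)$, valid for the heat kernel.

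More directly, I would use the self-reproducing formula. Since $h_t$ solves the heat equation and is symmetric, write $h_{2t}=h_t\ast h_t$, so that the gradient satisfies $\nabla_x h_{2t}(x) = \int_X \nabla_x h_t(xy^{-1})\, h_t(y)\, dy$, and then apply the parabolic estimate $\|\nabla_x h_t\| \leq c\, t^{-1/2}\sup_{s\in[t/2,t]}|h_s|$ together with the known on-diagonal gradient bound. The essential analytic input is the standard first-order parabolic gradient estimate $\|\nabla h_t(x)\| \leq c\, t^{-1/2} \max_{s \in [t/2, 2t]} h_s(x)$, which is available because the Ricci curvature of $X$ is bounded below. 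Feeding the upper bound of Theorem~\ref{Main result} (with $i=0$) into this, the Gaussian factor $e^{-(1-\epsilon)(\|\rho\|^2 s + \langle\rho,H\rangle + \|H\|^2/(4s))}$ is, up to adjusting $\epsilon$, comparable over $s\in[t/2,2t]$ to its value at $s=t$; the polynomial prefactor $t^{-n/2}$ acquires the extra $t^{-1/2}$, yielding precisely $t^{-(n+1)/2}$.

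The technical heart of the argument is controlling the time-dependence of the Gaussian exponent when passing from time $s$ to time $t$ across the interval $[t/2,2t]$. The exponent $\|H\|^2/(4s)$ is decreasing in $s$ while $\|\rho\|^2 s$ is increasing, so one must verify that replacing $s$ by $t$ costs only a multiplicative constant after slightly enlarging $\epsilon$. This is a routine but careful calculation: for $s\in[t/2,2t]$ one has $\|H\|^2/(4s)\geq \|H\|^2/(8t)$ and $\|\rho\|^2 s \geq \|\rho\|^2 t/2$, and the lost halves of the exponent are absorbed into the slack between the fixed target $\epsilon$ and a smaller auxiliary $\epsilon'$ used in Theorem~\ref{Main result}. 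The main obstacle I anticipate is ensuring the parabolic gradient estimate is applicable in the needed pointwise, kernel form on $X$; I would either cite a standard Li--Yau-type estimate valid under the bounded-geometry hypotheses of symmetric spaces, or derive it from the representation of $\nabla h_t$ via the semigroup and the $i=0,1$ cases of Theorem~\ref{Main result}.
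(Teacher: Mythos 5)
One of the three routes you float --- the ``integrated Bochner/Li--Yau machinery'' --- is exactly what the paper does: it quotes the Li--Yau inequality of \cite{LIYAU} for manifolds with Ricci curvature bounded below by $-R^{2}$,
\begin{equation*}
\frac{\left\Vert \nabla h_{t}\right\Vert ^{2}}{h_{t}^{2}}-\gamma \frac{1}{h_{t}}\frac{\partial h_{t}}{\partial t}\leq \frac{nR^{2}\gamma ^{2}}{\sqrt{2}(\gamma -1)}+\frac{n\gamma ^{2}}{2t},
\end{equation*}
rearranges it to $\left\Vert \nabla h_{t}\right\Vert ^{2}\leq c\frac{1+t}{t}h_{t}^{2}+h_{t}\frac{\partial h_{t}}{\partial t}$, and feeds in the heat kernel bound (\ref{ankerest}) together with Theorem \ref{Main result} with $i=1$ for $\partial _{t}h_{t}$, absorbing all polynomial factors into the $\epsilon $-slack of the exponential. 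Two discrepancies with your version of this branch: the curvature forces the constant term $nR^{2}\gamma ^{2}/(\sqrt{2}(\gamma -1))$, so the correct quotable form carries $(1+t)/t$ rather than your bare $t^{-1}$; and this route needs the $i=1$ case of Theorem \ref{Main result}, not only $i=0$. Had you committed to this branch and carried it out, your proof would essentially coincide with the paper's.

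The route you actually declare to be the essential one, however, has a genuine gap. First, the input $\left\Vert \nabla h_{t}(x)\right\Vert \leq ct^{-1/2}\max_{s\in \lbrack t/2,2t]}h_{s}(x)$ is asserted on the sole ground that the Ricci curvature is bounded below; for $t\geq 1$ no such pure $t^{-1/2}$ estimate is standard (gradient bounds under $\mathrm{Ric}\geq -R^{2}$ carry a curvature correction of order $R$ at large times), so this step requires a proof or a precise citation, not an appeal to bounded geometry. Second, and more seriously, the step you yourself call the ``technical heart'' fails as described: over the window $s\in \lbrack t/2,2t]$ you retain only $\left\Vert \rho \right\Vert ^{2}s\geq \left\Vert \rho \right\Vert ^{2}t/2$ and $\left\Vert H\right\Vert ^{2}/(4s)\geq \left\Vert H\right\Vert ^{2}/(8t)$, i.e.\ only \emph{half} of the exponent, and a multiplicative loss of $2$ in the exponent cannot be ``absorbed into the slack between $\epsilon $ and $\epsilon ^{\prime }$'': one would need $\tfrac{1}{2}(1-\epsilon ^{\prime })\geq 1-\epsilon $, which forces $\epsilon \geq 1/2$. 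As written, your argument yields the corollary only with $(1-\epsilon )/2$ in place of $(1-\epsilon )$, a strictly weaker statement. A repair is possible: use a backward window of width $\min (t/2,1)$, say $s\in \lbrack t-\min (t/2,1),t]$, so that $\left\Vert H\right\Vert ^{2}/(4s)\geq \left\Vert H\right\Vert ^{2}/(4t)$ and $\left\Vert \rho \right\Vert ^{2}s\geq \left\Vert \rho \right\Vert ^{2}t-O(1)$, losing only harmless additive constants --- or abandon time windows entirely and run the pointwise Li--Yau argument, as the paper does. Finally, your first proposed display is not homogeneous in $h$ (its right-hand side scales like $h^{1/2}$), so it cannot be correct as stated.
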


Let $\Gamma $ be a discrete torsion free subgroup of $G$. Then the locally
symmetric space $M=\Gamma \backslash X$, equipped with the projection of the
canonical Riemannian structure of $X$, becomes a Riemannian manifold. Denote
by $\Delta _{M}$ the Laplacian on $M$ and by ${d_M}$ the Riemannian distance
on $M$. We denote by $\pi :X\longrightarrow M$ the canonical projection and
for $x\in X$ we set $\tilde{x}=\pi \left( x\right) $. Denote by 
\begin{equation*}
P_{s}({x},{y})={\sum_{\gamma \in \Gamma }}e^{-sd(x,\gamma y)},\text{ \ }%
x,y\in X,
\end{equation*}%
and 
\begin{equation*}
\delta (\Gamma )=\inf \{s\in (0,\infty ):P_{s}({x},{y})<\infty \},
\end{equation*}%
the \emph{Poincar{\'{e}} series} and the \emph{critical exponent}
respectively.

Set also 
\begin{equation*}
\rho _{m}=\min_{H\in \overline{\mathfrak{a}_{+}},\text{ }\left\Vert
H\right\Vert =1}\langle \rho ,H\rangle .
\end{equation*}

Let $\delta (\Gamma )<\left\Vert \rho \right\Vert +\rho _{m}.$ Consider $\alpha _{2}\in (\delta
(\Gamma ),\left\Vert \rho \right\Vert +\rho _{m})$ and $\alpha _{1},\alpha
_{3}\in \lbrack 0,1]$ such that $\alpha _{1}\alpha _{3}\in \left[ \left(
\left( \alpha _{2}-\rho _{m}\right) /\left\Vert \rho \right\Vert \right)
^{2},1\right] $.

Next, we obtain estimates of the heat kernel on a locally symmetric space.
\begin{theorem}
\label{Locally symmetric 2} Suppose that $M=\Gamma \backslash X,$ with $%
\delta (\Gamma )<\left\Vert\rho \right\Vert+\rho _{m}$. Then, for all $\epsilon\in(0,1)$ there is a constant $c>0,$ such that 
\begin{equation}
\begin{split}
\left| \frac{\partial ^{i}h_{t}^{M}}{\partial t^{i}}(\tilde{x},\tilde{y})
\right| &\leq  \frac{c}{t^{n/2+i}}e^{-(1-\epsilon )\left( (1-\alpha_{1}
)\left\Vert\rho \right\Vert^{2}t+(\alpha_{2}-\delta(\Gamma))d_{M}(\tilde{x},%
\tilde{y})+(1-\alpha_{3} )\frac{d_{M}^{2}(\tilde{x},\tilde{y})}{4t} \right)} \\
& \times P_{\epsilon +\delta (\Gamma )}(x,y),
\end{split}%
\end{equation}
for all $t>0$ and $\tilde{x},\tilde{y} \in M$.
\end{theorem}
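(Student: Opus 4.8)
The plan is to pass from the heat kernel on $X$ to the one on $M$ by the method of images and then feed the estimate of Theorem \ref{Main result} into the resulting sum. Since $\Gamma$ is discrete and torsion free, the heat kernel on $M=\Gamma\backslash X$ is
\[
h_t^M(\tilde x,\tilde y)=\sum_{\gamma\in\Gamma}h_t(x,\gamma y)=\sum_{\gamma\in\Gamma}h_t(\exp H_\gamma),
\]
where $H_\gamma\in\overline{\mathfrak{a}_{+}}$ denotes the Cartan projection of $x^{-1}\gamma y$, so that $r_\gamma:=\|H_\gamma\|=d(x,\gamma y)$. I would first differentiate this series $i$ times in $t$ term by term: the bound of Theorem \ref{Main result} controls each differentiated term, and since the Gaussian factor $e^{-(1-\epsilon)r_\gamma^2/(4t)}$ decays faster than the number of orbit points grows, the differentiated series converges locally uniformly in $t$, which legitimizes the interchange. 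It then suffices to estimate $\sum_{\gamma\in\Gamma}|\partial_t^i h_t(\exp H_\gamma)|$. Applying Theorem \ref{Main result} together with the elementary inequality $\langle\rho,H_\gamma\rangle\ge\rho_m\|H_\gamma\|=\rho_m r_\gamma$, valid for every $H_\gamma\in\overline{\mathfrak{a}_{+}}$ by the definition of $\rho_m$, bounds each term by
\[
c\,t^{-(n/2)-i}\exp\Big(-(1-\epsilon)\big(\|\rho\|^2t+\rho_m r_\gamma+r_\gamma^2/(4t)\big)\Big).
\]

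The next step is to split this exponent so as to isolate a factor depending only on $d_M:=d_M(\tilde x,\tilde y)$ from a factor that is summable over $\Gamma$. Using the prescribed weights I decompose $\|\rho\|^2t=(1-\alpha_1)\|\rho\|^2t+\alpha_1\|\rho\|^2t$ and $r_\gamma^2/(4t)=(1-\alpha_3)r_\gamma^2/(4t)+\alpha_3 r_\gamma^2/(4t)$, retain $(1-\alpha_1)\|\rho\|^2t$ together with $(1-\alpha_3)r_\gamma^2/(4t)\ge(1-\alpha_3)d_M^2/(4t)$ (valid since $r_\gamma\ge d_M$) for the common factor, and devote the remaining budget
\[
B_\gamma:=\alpha_1\|\rho\|^2t+\rho_m r_\gamma+\alpha_3\,r_\gamma^2/(4t)
\]
to manufacturing the linear term in $d_M$ and the summand of the Poincar\'e series.

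The crux of the argument is the lower bound on $B_\gamma$. By the arithmetic--geometric mean inequality,
\[
\alpha_1\|\rho\|^2t+\alpha_3\frac{r_\gamma^2}{4t}\ge \sqrt{\alpha_1\alpha_3}\,\|\rho\|\,r_\gamma\ge(\alpha_2-\rho_m)\,r_\gamma,
\]
where the last inequality is exactly the hypothesis $\alpha_1\alpha_3\ge\big((\alpha_2-\rho_m)/\|\rho\|\big)^2$; this is the single point at which all the constraints on $\alpha_1,\alpha_2,\alpha_3$ are consumed. Adding $\rho_m r_\gamma$ yields $B_\gamma\ge\alpha_2 r_\gamma$, and writing $\alpha_2 r_\gamma=\delta(\Gamma)r_\gamma+(\alpha_2-\delta(\Gamma))r_\gamma\ge\delta(\Gamma)r_\gamma+(\alpha_2-\delta(\Gamma))d_M$ (using $\alpha_2>\delta(\Gamma)$ and $r_\gamma\ge d_M$) separates a linear $d_M$-term from the factor $e^{-\delta(\Gamma)r_\gamma}$. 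Pulling the $d_M$- and $t$-dependent factor, which is independent of $\gamma$, out of the sum then leaves $\sum_\gamma e^{-\delta(\Gamma)r_\gamma}$, which up to the $\epsilon$-adjustment below is the Poincar\'e series.

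The main obstacle is the bookkeeping of $\epsilon$. The residual sum must be $P_{\epsilon+\delta(\Gamma)}(x,y)$, which converges precisely because $\epsilon+\delta(\Gamma)>\delta(\Gamma)$, whereas the naive split extracts only the critical parameter $\delta(\Gamma)$. The surplus $\alpha_2-\delta(\Gamma)>0$—available exactly because the hypothesis $\delta(\Gamma)<\|\rho\|+\rho_m$ makes the admissible interval for $\alpha_2$ nonempty—is the reserve that supplies the extra decay: reserving a small multiple of it for the series and applying Theorem \ref{Main result} with a parameter slightly smaller than the target $\epsilon$, one arranges $\sum_\gamma e^{-(\epsilon+\delta(\Gamma))r_\gamma}=P_{\epsilon+\delta(\Gamma)}(x,y)<\infty$ while the common factor keeps the stated form, after relabeling $\epsilon$ by a fixed multiple. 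I expect this tracking of the various $\epsilon$'s, rather than the geometry, to be the only delicate part; the substance of the proof is the method of images together with the single arithmetic--geometric mean step above.
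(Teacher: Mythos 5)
Your proposal is correct and follows essentially the same route as the paper: the method of images, Theorem \ref{Main result} combined with $\langle\rho,H\rangle\ge\rho_{m}\left\Vert H\right\Vert$, a split of the exponent whose crux is exactly your arithmetic--geometric mean step (the paper phrases it as the nonnegativity of the quadratic expression in (\ref{eqn quadratic}), which is the same discriminant condition on $\alpha_{1}\alpha_{3}$), and the same $\epsilon$-relabeling that reserves part of the surplus $\alpha_{2}-\delta(\Gamma)>0$ to extract the convergent series $P_{\epsilon+\delta(\Gamma)}(x,y)$. Your explicit justification of term-by-term differentiation is a small addition that the paper leaves implicit in its ``summation argument.''
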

Observe that the above result extends the results
of Weber in \cite{WE}.

Let $\kappa $ be a $K$-bi-invariant function and let $\ast |\kappa |$ denote the
convolution operator whose kernel is $|\kappa |$. Let also $p\in [1,\infty ]$, denote by $p^{\prime }$ its conjugate and set 
\begin{equation*}
s(p)=2\min \left\{ \frac{1}{p},\frac{1}{p^{\prime }}\right\} .
\end{equation*}
 Assume that the following version of the Kunze-Stein phenomenon holds,
\begin{equation}\tag{KS}\label{kunzestein}
\Vert \ast |\kappa |\Vert _{L^{p}(M)\rightarrow L^{p}(M)}\leq c\underset{G}{%
\int }|\kappa (g)|\phi _{-i\eta _{\Gamma }}(g)^{s(p)}{dg},
\end{equation}%
where $\phi _{\lambda }$ are the elementary spherical functions, $\eta
_{\Gamma }$ is a vector of the euclidean sphere $S(0,(\left\Vert\rho
\right\Vert^{2}-\lambda _{0})^{1/2})$ of $\mathfrak{a}^{\ast }$ and $\lambda
_{0}$ is the bottom of the spectrum of the Laplacian $-\Delta _{M}$. 
For example, this is the case for $M=\Gamma \backslash G/K$, when
	(i) $\Gamma $ \textit{is a lattice}, or (ii) $G$ \textit{possesses Kazhdan's property (T)} or (iii) $\Gamma \backslash G$ \textit{is non-amenable},
see \cite{LOMAjga} for more details.

Denote by $H_{t}=e^{t\Delta_M }$ the heat semigroup on $M$. Fix $i\in \mathbb{%
N}$. Then, for all $\sigma \geq 0$ we consider as in \cite{AN1} the Littlewood-Paley-Stein operator 
\begin{equation*}
{H}_{\sigma }(f)(x)=\left( \int_{0}^{\infty }e^{2\sigma t}\left( t^{2i-1}\left\vert
\frac{\partial ^{i}}{\partial t^{i}}H_{t}f(x)\right\vert ^{2}+\|\nabla
_{x}H_{t}f(x)\|^{2}\right) \textbf{  }\right) ^{1/2}.
\end{equation*}%

Next, we apply Corollary \ref{Gradient
estimate} and Theorem \ref{Locally symmetric 2} in order to prove $L^{p}$-boundedness of various operators related
to the heat semigroup $H_t = e^{t\Delta_{M}}$, on certain locally symmetric spaces $M$.
\begin{theorem}
\label{maximal} Suppose that $M$ satisfies \eqref{kunzestein}. Then, the
operator  ${H}_{\sigma}$ is bounded on $L^{p} (M), \;
p\in (1,\infty),$ provided that 
\begin{equation}\label{sigmacondition0}
\sigma < s (p) \left(\left\Vert\rho \right\Vert - \left\Vert \eta_{\Gamma }
\right\Vert\right) \left( 2\left\Vert\rho \right\Vert - s (p)
\left(\left\Vert\rho \right\Vert - \left\Vert \eta_{\Gamma }
\right\Vert\right)\right).
\end{equation}
\end{theorem}

The Littlewood-Paley-Stein operator was first introduced and studied by Lohou%
\'{e} \cite{LO}, in the case of Riemannian manifolds of non-positive
curvature. In a variety of geometric settings it has been proved that  ${H}_{\sigma}$ is bounded on $L^{p},\; p\in (1,\infty),$
under some conditions on $\sigma$ (see for example \cite{LO}). In
particular, in the case of a symmetric space $X$, Anker in \cite{AN1} has
shown that ${H}_{\sigma}$ is bounded in $L^{p}(X)$, provided $\sigma < 4\left\Vert \rho \right\Vert^2/p p^{\prime }$. In the
case of symmetric spaces, where $\eta_{\Gamma} = 0,$ \cite{LOMAjga}, we observe that the condition (\ref{sigmacondition0}) on $\sigma$ becomes $\sigma <
\left\Vert \rho \right\Vert^2 s (p) (2-s(p))=4 \left\Vert \rho
\right\Vert^2/ p p^{\prime },$ thus Theorem \ref{maximal} extends the result of Anker in \cite%
{AN1}. 

Finally, we obtain $L^{p}$-estimates for the operator $\Delta_{M}
e^{t\Delta_{M}}$.

\begin{theorem}
\label{Delta} Suppose that $M$ satisfies \eqref{kunzestein}. Then, for all $%
p\in [1,\infty]$ there exists $\epsilon >0$ such that 
\begin{equation}
\| \Delta_{M} e^ {- \Delta_{M} t} \|_{L^{p}(M) \rightarrow L^{p}(M)} \leq 
\begin{cases}
c t^{-1}, \quad \text{if} \quad 0 \leq t < 1, \\ 
c e^{-\epsilon t}, \quad \text{if} \quad t \geq 1,%
\end{cases}%
\end{equation}
\end{theorem}

This operator has been studied for complete Riemannian manifolds with
bounded geometry by Davies in \cite{DAV}. In \cite{DAV} the $L^{p}$-norm of the operator $%
\Delta_{M} e^{-t\Delta_{M}}$ is proved to
be bounded by a constant for $t \geq 1.$ In our case, we prove that it
decays exponentially as $t\rightarrow \infty,$ thus we extend the result of
Davies.

Let us now outline the organization of the paper. In Section \ref%
{Preliminaries} we recall some basic definitions about symmetric spaces and
the heat kernel. In addition, we recall some results providing estimates of
the heat kernel and estimates of its derivatives. In Section \ref{sectionsym} we prove Theorem \ref{Main result}. Next, in Section \ref%
{applications}, we obtain a variety of applications. Firstly, as a direct
application we prove gradient estimates of the heat kernel. Next, we prove estimates of the derivatives of the heat
kernel for locally symmetric spaces. Finally, we study Littlewood-Paley-Stein operators and the Laplacian of the heat operator. We prove
that they are bounded on $L^{p}(M),$ where $M$ is a locally symmetric space satisfying (\ref{kunzestein}).

Throughout this article the different constants will always be denoted by
the same letter $c$. When their dependence or independence is significant,
it will be clearly stated.

\section{Preliminaries}\label{Preliminaries}

\subsection{Symmetric and locally symmetric spaces}

\label{The heat operator on symmetric and locally symmetric spaces} We shall
recall some basic facts on symmetric and locally symmetric spaces. For
details, see \cite{AN1,LOMAjga}.

As it is already mentioned in the Introduction, a symmetric space $X$ is the Riemannian manifold  $G/K$ where $G$ is a  real semisimple Lie group, connected, noncompact, with finite center  and $K$ is a maximal
compact subgroup of $G$.

Let $\mathfrak{g}$ be the Lie algebra of $G$ and $\mathfrak{k}$ the Lie
algebra of $K$ respectively. Denote by $\mathfrak{p}$ the orthogonal
complement of $\mathfrak{k},$ then $\mathfrak{g}= \mathfrak{k} \oplus 
\mathfrak{p}, $ is the Cartan decomposition at the Lie algebra level. Let us
choose a maximal abelian subspace $\mathfrak{a}$ of $\mathfrak{p}$. Let $%
\mathfrak{a}^{\ast}$ be the dual space of $\mathfrak{a}$. For any $\alpha
\in \mathfrak{a}^{\ast}$, let $ad(X)(Y)=[X,Y]$ for all $X,Y\in \mathfrak{g}$
and set 
\begin{equation*}
\mathfrak{g}_\alpha : =\{Y \in \mathfrak{g}: \quad {ad} (H) (Y) = \alpha (H)
Y, \quad \text{ for all } H \in \mathfrak{a}\}.
\end{equation*}
If $\mathfrak{g}_\alpha \neq \{ 0 \},$ then $\alpha \in \mathfrak{a}^{\ast}
\backslash\{0\}$ is called a \emph{root} of the pair $(\mathfrak{g},%
\mathfrak{a})$ and $\mathfrak{g}_\alpha$ is called the root space. Denote by 
$\Sigma $ the set of all roots. If $\alpha$ is a root, then the only
multiples of $\alpha$ that can also be roots are $\pm \frac{1}{2} \alpha,\
\pm 2 \alpha$, and $-\alpha.$ A positive root $\alpha$ is called
indivisible if $\frac{1}{2} \alpha$ is not a root. We call an $H \in 
\mathfrak{a}$ \emph{regular} if $\alpha (H) \neq 0$ for all $\alpha \in
\Sigma$. The set of regular elements of $\mathfrak{a}^{\ast}$ is the
complement of a union of finitely many hyperplanes and the connected
components are called \emph{Weyl chambers}. Let us fix a Weyl chamber $%
\mathfrak{a}^{+}$. With respect to this Weyl chamber a root $\alpha$ is said
to be positive if $\alpha (H) > 0$ for all $H \in \mathfrak{a}^+$. We denote
by $\Sigma^+$ the set of positive roots and by $\Sigma^+_0$ the set of
indivisible positive roots. If $\overline{\mathfrak{a}^{+}}$ is the closure
of $\mathfrak{a}^+$ then we denote by $\mathfrak{a}^{*}_{+}$ and $\overline{%
\mathfrak{a}^{*}_{+}}$ the cones corresponding to $\mathfrak{a}^+$ and $%
\overline{\mathfrak{a}^{+}}$ in $\mathfrak{a}^{\ast} $ (see \cite{WE} for
more details).

We have the Cartan decomposition of $G$: 
\begin{equation*}
G = K \exp \overline{\mathfrak{a}^+} K .
\end{equation*}

Define $m_{\alpha} : = \dim \mathfrak{g}_{\alpha} $ to be the multiplicity
of a root $\alpha$ and let 
\begin{equation}
\rho: = \frac{1}{2} \underset{\alpha \in \sum^+}{\sum} m_{\alpha} \alpha
\end{equation}
be half the sum of the positive roots counted according to their
multiplicity.

Let $x,y\in X$ and consider a base point $x_{0}\in X.$ Then, there are $%
g,h\in G$ such that $x=g(x_{0})$ and $y=h(x_{0}).$ Because of the Cartan
decomposition, there are $k_{1},k_{2}\in K,$ $H\in \overline{{a}^{+}}$ such that $%
g^{-1}h=k_{1}\exp {H}k_{2}.$ Then, the distance $d(x,y)$ of $x,y\in X$ is
given by 
\begin{equation}
d(x,y)=d(x_{0},\exp Hx_{0})=\left\Vert H\right\Vert .
\end{equation}

Recall that by the Cartan decomposition, the Haar measure on $G$ is written
	as 
	\begin{equation}
	\int_{G}f\left( g\right) dg=c\int_{K}dk_{1}\int_{\mathfrak{a}^{+}}\delta
	\left( H\right) dH\int_{K}f\left( k_{1}\left( \exp H\right) k_{2}\right)
	dk_{2},  \label{mes}
	\end{equation}%
	where, $dk$ is the normalised Haar measure of $K$ and the modular function $%
	\delta \left( H\right) $ satisfies: 
	\begin{equation*}
	\delta \left( H\right) \leq e^{2\left\langle \rho ,H\right\rangle },\text{ \ 
	}H\in \overline{\mathfrak{a}^{+}}.
	\end{equation*}
	From (\ref{mes}) it follows that if $f$ is $K$-bi-invariant, then 
	\begin{equation}
	\int_{G}f\left( g\right) dg=c\int_{{\overline{\mathfrak{a}^{+}}}}f\left( \exp
	H\right) \delta (H)dH.
	\end{equation}
	Recall that there are positive constants $c$ and $\alpha $ such that 
	\begin{equation}
	0<\phi _{-i\lambda }(\exp H)\leq c(1+\left\Vert H\right\Vert )^{a}e^{
		\lambda (H)-\rho (H)},\; H \in \overline{%
		\mathfrak{a}^{+}},\;\lambda \in \overline{\mathfrak{a}^{\ast}_{+}},  \label{spherical estimate}
	\end{equation}%
	see \cite{LOMAjga} for more details.

If $\Gamma \subset G$ is a discrete, torsion free subgroup of isometries of $%
X $, then the quotient space $M=\Gamma \backslash X$ equipped with the
projection of the Riemannian metric of $X$ is a Riemannian manifold and is
called \emph{locally symmetric space}. If $\pi :X\rightarrow M$ is the
canonical projection, we write $\tilde{x}=\pi (x)$. The distance $d_{M}$ on $%
M$ is given by 
\begin{equation}
d_{M}(\tilde{x},\tilde{y})=\min_{\gamma \in \Gamma }d(x,\gamma y).
\label{distance function on M}
\end{equation}

\subsection{The heat kernel on symmetric and locally symmetric spaces}\label{The heat kernel}



Denote by $h_{t}$ the heat kernel on the symmetric space $X$. The heat kernel on symmetric spaces has been extensively studied \cite%
{ANJ,ANO}. Sharp estimates of the heat kernel have been real hyperbolic
space have been obtained in \cite{DAVMAN} while in \cite{ANJ} Anker and Ji
and Anker and Ostellari in \cite{ANO}, generalized results of \cite{DAVMAN}
to all symmetric spaces of noncompact type. They proved the following sharp
estimate 
\begin{equation}
\begin{split}
h_{t}(\exp {H})& \asymp t^{-n/2}\left( \underset{\alpha \in \Sigma _{0}^{+}}{%
\prod }(1+\langle \alpha ,H\rangle )(1+t+\langle \alpha ,H\rangle
)^{\frac{m_{\alpha }+m_{2\alpha }}{2}-1}\right) \\
& \times e^{-\left\Vert \rho \right\Vert ^{2}t-\langle \rho ,H\rangle
-\left\Vert H\right\Vert ^{2}/(4t)},
\end{split}
\label{ostellari}
\end{equation}%
for all $H\in \overline{\mathfrak{a}^{+}}$ and all $t>0.$ Recall that we
write $f\asymp h$ for functions $f$ and $h$ if there is a positive constant $%
c>0$ such that $\frac{1}{c}h\leq f\leq ch.$

Set 
\begin{equation}
m=\underset{a\in \Sigma {}_{0}{}^{+}}{\sum }\left( \frac{m{}_{\alpha
}+m{}_{2\alpha }}{2}-1\right) \text{ and }A=\underset{a\in \Sigma_{0}^{+}}{\sum }\frac{m_{\alpha }+m_{2\alpha }}{2}.  \label{m index}
\end{equation}%
Note that (\ref{ostellari}) implies the following estimate%
\begin{equation}
h_{t}(\exp {H})\leq ct^{-n/2}(1+t)^{m}(1+\left\Vert H\right\Vert
)^{A}e^{-\left( \left\Vert \rho \right\Vert ^{2}t+\langle \rho ,H\rangle
+\left\Vert H\right\Vert {}^{2}/(4t)\right) },  \label{ankerest1}
\end{equation}%
for all $H\in \overline{\mathfrak{a}^{+}}$ and all $t>0.$

In \cite{DAVMAN}, Davies and Mandouvalos obtained heat kernel estimates on quotients of
the hyperbolic spaces, and in \cite{WE} Weber generalized these results on
locally symmetric spaces.

Estimates of the time derivatives of the heat kernel are obtained by Mandouvalos
and Tselepidis in \cite{MANTSE} for the case of real hyperbolic spaces. In \cite{GR2}, Grigory'an derived Gaussian upper bounds for all time
derivatives of the heat kernel, under some assumptions on the on-diagonal
upper bound for $h_{t}$ on an arbitrary complete non-compact Riemannian
manifold $X$. More precisely, it is proved that if there exists an increasing
continuous function $f(t)>0,$ $t>0,$ such that 
\begin{equation*}
h_{t}(x,x)\leq \frac{1}{f(t)},\text{ for all }t>0\text{ and }x\in X,
\end{equation*}%
then, 
\begin{equation}
\left\vert \frac{\partial ^{i}h_{t}}{\partial t^{i}}\right\vert (x,y)\leq 
\frac{1}{\sqrt{f(t)f_{2i}(t)}},\text{ for all }i\in \mathbb{N},\text{ }t>0,%
\text{ }x,y\in X,  \label{Grigory'an}
\end{equation}%
where the sequence of functions $f_{i}=f_{i}(t),$ is defined by 
\begin{equation}\label{sequencef}
f_{0}(t)=f(t)\text{ and }f_{i}(t)=\int_{0}^{t}f_{i-1}(s)ds,\;i\geq 1.
\end{equation}

We shall now apply the estimate (\ref{Grigory'an}) of Grigory'an in the case
of symmetric spaces of noncompact type.

We note that according to (\ref{ankerest}), 
	\begin{equation*}
	h_{t}(\exp H)\leq t^{-n/2}(1+t)^{m}.
	\end{equation*}
	Let $\Sigma^+$ be the set of positive roots and $l$ be the rank of $X$. Then, it holds $n=l+\sum\limits_{\alpha\in \Sigma^+}m_{\alpha}$, \cite{ANO}. It follows from (\ref{m index}) that  $n/2>m$. Then, 	
	\begin{equation*}
	f(t)={t^{n/2}}{(1+t)^{-m}}.
	\end{equation*}is an increasing function, thus we can invoke (\ref{Grigory'an}).
	
	By an induction argument we get that 
	\begin{equation}  \label{fi}
	f_{i}(t)\geq t^{(n/2)+i}{(1+t)^{-m}}.
	\end{equation}
	
	Then, we get the following result.
\begin{lemma}
	\label{Step 0 estimates} Suppose that $X$ is a symmetric space of noncompact
	type. For all $i\in \mathbb{N}$ there is a constant $c>0$ such that 
	\begin{equation}
	\left\vert \frac{\partial ^{i}h_{t}}{\partial t^{i}}(\exp {H})\right\vert
	\leq ct^{-(n/2)-i}(1+t)^{m},\text{ for all }t>0,\text{ }H\in \overline{%
		\mathfrak{a}^{+}},
	\end{equation}%
	where $m$ is defined by (\ref{m index}).
\end{lemma}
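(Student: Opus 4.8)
The plan is to feed an on-diagonal bound extracted from (\ref{ankerest}) into Grigory'an's estimate (\ref{Grigory'an}). Recall that (\ref{Grigory'an}) takes as input an increasing continuous function $f(t)>0$ with $h_t(x,x)\le 1/f(t)$ for all $t>0$ and $x\in X$, and returns $\left|\partial_t^i h_t\right|(x,y)\le 1/\sqrt{f(t)f_{2i}(t)}$, where $f_{2i}$ is the $2i$-fold iterated integral of $f$ starting at $0$. Since this output is uniform in $x,y$, it already matches the $H$-independent right-hand side of the Lemma, so the whole problem reduces to choosing an admissible $f$ and estimating $f_{2i}$ from below. (The case $i=0$ is immediate, since $f_0=f$ gives $|h_t|(x,y)\le 1/f(t)$.)

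For the on-diagonal input I would set $H=0$ in (\ref{ankerest}); discarding the factor $e^{-\|\rho\|^2 t}\le 1$ yields $h_t(x,x)\le c\,t^{-n/2}(1+t)^m$ for all $t>0$ and $x\in X$, with $m$ as in (\ref{index m}). Accordingly I put $f(t)=c^{-1}t^{n/2}(1+t)^{-m}$. To check admissibility I compute $(\log f)'(t)=\tfrac{n/2}{t}-\tfrac{m}{1+t}$ and use that $m<n/2$ always holds: from (\ref{index m}) together with $n=\dim\mathfrak{a}+\sum_{\alpha\in\Sigma^+}m_\alpha=\dim\mathfrak{a}+\sum_{\alpha\in\Sigma_0^+}(m_\alpha+m_{2\alpha})$ one gets $\tfrac n2-m=\tfrac12\dim\mathfrak{a}+|\Sigma_0^+|>0$. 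Hence if $m\ge 0$ then $(\log f)'\ge (n/2-m)/t>0$, while if $m<0$ both summands are nonnegative; in either case $f$ is positive, continuous and increasing.

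It remains to bound $f_{2i}$ from below. By the Cauchy formula for repeated integration, $f_{2i}(t)=\tfrac{1}{(2i-1)!}\int_0^t (t-s)^{2i-1}f(s)\,ds$ for $i\ge 1$. Restricting the integral to $s\in[t/4,t/2]$, where $(t-s)^{2i-1}\ge (t/2)^{2i-1}$ and $f(s)\ge f(t/4)$ by monotonicity, gives $f_{2i}(t)\ge c\,t^{2i}f(t/4)=c\,t^{2i+n/2}(1+t/4)^{-m}$. Substituting this and $f(t)$ into (\ref{Grigory'an}) yields $\left|\partial_t^i h_t\right|(\exp H)\le c\,t^{-n/2-i}(1+t)^{m/2}(1+t/4)^{m/2}$, and since $(1+t)^{m/2}(1+t/4)^{m/2}\le c(1+t)^m$ for all $t>0$ (treating $m\ge 0$ and $m<0$ separately, the latter using $1+t/4\ge\tfrac14(1+t)$), the desired estimate follows.

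The only delicate point is the third step: because $f_{2i}$ has no closed form, one must produce a lower bound carrying both the correct power $t^{n/2+2i}$ and the correct weight $(1+t)^{-m}$, and it is the monotonicity argument on the subinterval $[t/4,t/2]$ that achieves this. The admissibility of $f$, resting on the elementary inequality $m<n/2$, is the other point to verify but is routine.
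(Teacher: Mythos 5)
Your proposal is correct and follows essentially the same route as the paper: extract the on-diagonal bound from (\ref{ankerest}), feed $f(t)=c^{-1}t^{n/2}(1+t)^{-m}$ into Grigor'yan's estimate (\ref{Grigory'an}), and bound $f_{2i}$ from below. The only (minor) difference is that the paper proves $f_{i}(t)\geq t^{(n/2)+i}(1+t)^{-m}$ by induction while you use the Cauchy repeated-integration formula with a subinterval argument; your version also spells out details the paper leaves implicit, namely the inequality $m<n/2$ and the case $m<0$.
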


	\section{Estimates of the time derivatives of the heat kernel on symmetric spaces}\label{sectionsym}
Let $X$ be a noncompact symmetric space. Recall the heat kernel estimate
\begin{equation}
h_{t}(\exp {H})\leq ct^{-n/2}(1+t)^{m}(1+\left\Vert H\right\Vert
)^{A}e^{-\left( \left\Vert \rho \right\Vert ^{2}t+\langle \rho ,H\rangle
	+\left\Vert H\right\Vert {}^{2}/(4t)\right) },  \label{ankerest}
\end{equation}%
for all  $t>0$ and $H\in \overline{\mathfrak{a}^{+}}$, where $m$ and $A$ are defined in (\ref{m index}).

In this section we shall prove the main result, stated in Theorem \ref{Main result}. For the proof of (\ref{required estimate}) we need several lemmata. The
following lemma is technical but important for the proof of Theorem \ref%
{Main result}. Roughly speaking, according to the following result, an estimate for a function and its second derivative implies an estimate for its first derivative. 

\begin{lemma}
	\label{Porper} Let 
	\begin{equation}
	\alpha >\beta \geq 0,\text{ }D\geq D_{\ast },\text{ }B\geq B_{\ast },\text{ }%
	C\geq C_{\ast },  \label{conditions Porper}
	\end{equation}%
	and assume that for fixed $H\in \overline{\mathfrak{a}^{+}}$ the function $%
	f_{H}:(0,+\infty )\rightarrow \mathbb{R}$, satisfies 
	\begin{equation}
	\left\vert f_{H}(t)\right\vert \leq ct^{-\alpha }(1+t)^{\beta }(1+\left\Vert
	H\right\Vert )^{\gamma }e^{-Dt-B\langle \rho ,H\rangle -C{\left\Vert
			H\right\Vert ^{2}}/{(4t)}}  \label{fH1}
	\end{equation}%
	and 
	\begin{equation}
	\left\vert \frac{d^{2}f_{H}}{dt^{2}}(t)\right\vert \leq ct^{-\alpha
		-2}(1+t)^{\beta }(1+\left\Vert H\right\Vert )^{\gamma }e^{-D_{\ast
		}t-B_{\ast }\langle \rho ,H\rangle -C_{\ast }{\left\Vert H\right\Vert ^{2}}/{%
			(4t)}}.  \label{fH2}
	\end{equation}%
	Then, for all $\epsilon\in(0,1)$, there is a constant $c=c(\epsilon)>0$, such that for all $%
	H\in \overline{\mathfrak{a}^{+}},$ 
	\begin{equation*}
	\begin{split}
	\left\vert \frac{df_{H}}{dt}(t)\right\vert &\leq ct^{-\alpha -1}(1+t)^{\beta
	}(1+\left\Vert H\right\Vert )^{\gamma }\\
	&\times e^{-\left( {(D_{\ast }+D)t}/{2}+{%
			(B_{\ast }+B)\langle \rho ,H\rangle }/{2}+{(C_{\ast }+C\lambda _{\epsilon })}%
		{\left\Vert H\right\Vert ^{2}}/{8t}\right) },
	\end{split}
	\end{equation*}%
	where $\lambda _{\epsilon }=\frac{1-\epsilon }{1+\epsilon }.$
\end{lemma}

\begin{proof}
	Firstly, for all $\delta >0$, the mean value theorem yields for some $\theta \in (t, t+\delta)$:
	\begin{equation*}  
	\left|\frac{df_{H}}{dt}(\theta)\right|= \frac{1}{\delta} |f_H(t+\delta)-f_H(t)|\leq  \frac{1}{\delta}\left(
	\left|f_{H}(t)\right| + \left| f_{H}(t+\delta)\right|\right).
	\end{equation*}
	Applying once again the mean value theorem, now on the function $\frac{df_H}{dt}$ on $[t, \theta]$	we have
	\[
	\left| \frac{df_H}{dt}(t) \right|\leq \left| \frac{df_H}{dt}(\theta) \right|+\delta\sup_{\tau \in (t,t+\delta)}\left|\frac{d^{2}f_{H}}{dt^2}(\tau)\right| .\]
	It follows that
	\begin{equation}  \label{mvt}
	\left|\frac{df_{H}}{dt}(t)\right| \leq \frac{1}{\delta}\left(
	\left|f_{H}(t)\right| + \left| f_{H}(t+\delta)\right|\right) + \delta
	\sup_{\tau \in (t,t+\delta)}\left|\frac{d^{2}f_{H}}{dt^2}(\tau)\right| .
	\end{equation}
	Note that $t\longrightarrow t^{-\alpha }(1+t)^{\beta }$ is a decreasing
	function of $t$, since $\alpha >\beta ,$ therefore 
	\begin{equation*}
	(t+\delta )^{-\alpha }(1+t+\delta )^{\beta }\leq t^{-\alpha }(1+t)^{\beta }.
	\end{equation*}%
	Note also that {$\langle \rho,H \rangle \geq \rho_{m}\|H\|>0$, for $H\neq 0$.} Thus (\ref{mvt}) and the estimates (\ref{fH1}) and (\ref{fH2}) imply that 
	\begin{equation*}
	\begin{split}
	\left\vert \frac{df_{H}}{dt}(t)\right\vert & \leq c\frac{1}{\delta }%
	t^{-\alpha }(1+t)^{\beta }(1+\|H\|)^{\gamma }e^{-Dt-B\langle \rho ,H\rangle -C{%
			\left\Vert H\right\Vert ^{2}}/{4(t+\delta )}}+ \\
	& +c\delta t^{-\alpha -2}(1+t)^{\beta }(1+\|H\|)^{\gamma }e^{\left( -D_{\ast
		}t-B_{\ast }\langle \rho ,H\rangle -C_{\ast }{\left\Vert H\right\Vert ^{2}}/{%
			4(t+\delta )}\right) }.
	\end{split}%
	\end{equation*}
	
	Choose now 
	\begin{equation*}
	\delta= \epsilon t e^{ -{(D-D_{*})t}/{2} - {(B-B_{*})\langle \rho, H
			\rangle }/{2} -(C-C_{*}){\left\Vert H\right\Vert^2}/{(8 t)}  }.
	\end{equation*}
	
	Thus, 
	\begin{align}
	\left\vert \frac{df_{H}}{dt}(t)\right\vert  \leq & c\frac{1}{\epsilon }%
	t^{-\alpha -1}(1+t)^{\beta }(1+\|H\|)^{\gamma } \notag \\
	&\times e^{-(D+D_{\ast })t/{2}-(B+B_{\ast
		}) \langle \rho ,H\rangle /{2}+(C-C_{\ast }) \left\Vert H\right\Vert
		^{2}/(8t)-C\left\Vert H\right\Vert ^{2}/4(t+\delta )}+ \notag \\
	& +c\epsilon t^{-\alpha -1}(1+t)^{\beta }(1+\|H\|)^{\gamma } \notag \\
	&\times e^{ -(D+D_{\ast }) t/%
		{2}-(B+B_{\ast }) \langle \rho ,H\rangle/{2} -(C-C_{\ast }) \left\Vert
		H\right\Vert ^{2}/{(8t)}-C_{\ast }\left\Vert H\right\Vert ^{2}/4(t+\delta )}\label{inequality0}
	\end{align}
	From (\ref{conditions Porper}), it follows that $\delta \leq \epsilon t.$
	Thus 
	\begin{equation*}
	\frac{1}{2t}-\frac{1}{t+\delta }\leq -\frac{1-\epsilon }{2t(1+\epsilon )}=-%
	\frac{\lambda _{\epsilon }}{2t}.
	\end{equation*}%
	Consequently, 
	\begin{equation}
	\frac{C-C_{\ast }}{2}\frac{\left\Vert H\right\Vert ^{2}}{4t}-C\frac{%
		\left\Vert H\right\Vert ^{2}}{4(t+\delta )}\leq -\frac{\left\Vert
		H\right\Vert ^{2}}{4t}\frac{C\lambda _{\epsilon }+C_{\ast }}{2},
	\label{inequality1}
	\end{equation}%
	and similarly 
	\begin{eqnarray}
	\frac{C-C_{\ast }}{2}\frac{\left\Vert H\right\Vert ^{2}}{4t}+\frac{C_{\ast
		}\left\Vert H\right\Vert ^{2}}{4(t+\delta )} &\geq &\frac{\left\Vert
		H\right\Vert ^{2}}{4t}\frac{C_{\ast }\lambda _{\epsilon }+C}{2}  \notag \\
	&\geq &\frac{\left\Vert H\right\Vert ^{2}}{4t}\frac{C\lambda _{\epsilon
		}+C_{\ast }}{2}.  \label{inequality2}
	\end{eqnarray}%
	Thus, from (\ref{inequality0}), (\ref{inequality1}) and (\ref{inequality2})
	it follows that 
	\begin{equation*}
	\begin{split}
	\left\vert \frac{df_{H}}{dt}(t)\right\vert &\leq  c(\epsilon)t^{-\alpha -1}(1+t)^{\beta
	}(1+\left\Vert H\right\Vert )^{\gamma }\\
	& \times e^{-\left( {(D_{\ast }+D)t}/{2}+{%
			(B_{\ast }+B)\langle \rho ,H\rangle }/{2}+{(C_{\ast }+C\lambda _{\epsilon })}%
		{\left\Vert H\right\Vert ^{2}}/{8t}\right) },
	\end{split}
	\end{equation*}
	where $c(\epsilon)=c(1/\epsilon+\epsilon)$, with $c>0$  constant.
\end{proof}

Next, we use an inductive argument. More precisely, we apply Lemma \ref{Porper} and we are improving step by step the resulting estimates by using an iterative argument. 

\begin{lemma}
	\label{Technical Lemma} Suppose that $X$ is a symmetric space of noncompact
	type. Let us fix {$\epsilon \in (0,1)$} and set $\lambda _{\epsilon }=\frac{1-\epsilon }{1+\epsilon }$. Then, for all $i,\ell \in \mathbb{N},$ there are
	constants $c,\beta _{\ell}^{i}$ and $\gamma _{\ell}^{i}>0$ such that 
	\begin{align}
	\left\vert \frac{\partial ^{i}h_{t}}{\partial t^{i}}(\exp H)\right\vert &\leq 
	ct^{-(n/2)-i}(1+t)^{m}(1+\left\Vert H\right\Vert )^{A}\notag\\
	&\times e^{-\beta
		_{\ell}^{i}\left( \left\Vert \rho \right\Vert ^{2}t+\langle \rho ,H\rangle
		\right) }e^{-\gamma _{\ell}^{i}{\left\Vert H\right\Vert ^{2}}/{(4t)}},
	\label{sequences estimate}
	\end{align}
	for all $t>0$  and $H\in \overline{\mathfrak{a}_{+}}$, where $%
	m,A $ are defined in (\ref{m index}) and $c$ is a constant that depends on $%
	\epsilon ,i,\ell$. Furthermore, the sequences $\beta _{\ell}^{i},\gamma _{\ell}^{i}$
	satisfy the iteration formulas 
	\begin{equation}
	\begin{split}
	\beta _{\ell}^{i}& =\frac{1}{2}(\beta _{\ell-1}^{i-1}+\beta _{\ell-1}^{i+1}),
	\\
	\gamma _{\ell}^{i}& =\frac{1}{2}(\lambda _{\epsilon}\gamma
	_{\ell-1}^{i-1}+\gamma _{\ell-1}^{i+1}),
	\end{split}
	\label{recurrence relations}
	\end{equation}%
	the inequalities $\beta_{\ell }^{i-1}\geq \beta_{\ell }^{i+1}$, $\gamma_{\ell }^{i-1}\geq \gamma_{\ell }^{i+1}$
	and the initial conditions 
	\begin{equation}
	\beta _{0}^{i}=0,\gamma _{0}^{i}=0,\text{ for all }i\geq 1,\;\beta
	_{\ell}^{0}=1,\gamma _{\ell}^{0}=1,\text{ for all }\ell\geq 0.
	\label{initial conditions}
	\end{equation}
\end{lemma}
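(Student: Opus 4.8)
The plan is to prove the estimate \eqref{sequences estimate} together with the recurrence relations \eqref{recurrence relations} and initial conditions \eqref{initial conditions} by a double induction, where the outer induction runs on the auxiliary parameter $\ell$ and, for each fixed $\ell$, one treats all orders $i$ simultaneously. First I would establish the base case $\ell=0$. For $i=0$ the estimate with $\beta_{0}^{0}=\gamma_{0}^{0}=1$ is exactly \eqref{ankerest}, since $\langle\rho,H\rangle\ge 0$ on $\overline{\mathfrak{a}^{+}}$ guarantees all the exponential factors are genuine decay terms. For $i\ge 1$ one takes $\beta_{0}^{i}=\gamma_{0}^{i}=0$, in which case \eqref{sequences estimate} reduces to the crude polynomial bound $|\partial_{t}^{i}h_{t}(\exp H)|\le ct^{-(n/2)-i}(1+t)^{m}$, which is precisely Lemma \ref{Step 0 estimates} (absorbing the harmless extra factor $(1+\|H\|)^{A}\ge 1$).

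The heart of the argument is the inductive step from level $\ell-1$ to level $\ell$, and this is where Lemma \ref{Porper} does the work. Fix $i\ge 1$ and apply the induction hypothesis at level $\ell-1$ to the two neighbouring orders $i-1$ and $i+1$: this gives the assumed bound \eqref{fH1} for $f_{H}(t)=\partial_{t}^{i}h_{t}(\exp H)$, with exponents $D=\beta_{\ell-1}^{i-1}\|\rho\|^{2}$, $B=\beta_{\ell-1}^{i-1}$, $C=\gamma_{\ell-1}^{i-1}$ coming from the order-$(i-1)$ estimate, and the bound \eqref{fH2} for the second derivative $\tfrac{d^{2}}{dt^{2}}f_{H}=\partial_{t}^{i+2}h_{t}(\exp H)$ — no, rather one applies it to $\partial_{t}^{i}h_{t}$ viewed as the first derivative of $\partial_{t}^{i-1}h_{t}$, so that $f_{H}=\partial_{t}^{i-1}h_{t}$, its derivative is $\partial_{t}^{i}h_{t}$, and its second derivative is $\partial_{t}^{i+1}h_{t}$. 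The two hypotheses of Lemma \ref{Porper} are then supplied by the order-$(i-1)$ bound (with starred constants) and the order-$(i+1)$ bound. The conclusion of Lemma \ref{Porper} averages the exponents, producing exactly $\beta_{\ell}^{i}=\tfrac12(\beta_{\ell-1}^{i-1}+\beta_{\ell-1}^{i+1})$ in the $\|\rho\|^{2}t$ and $\langle\rho,H\rangle$ slots, and $\gamma_{\ell}^{i}=\tfrac12(\lambda_{\epsilon}\gamma_{\ell-1}^{i-1}+\gamma_{\ell-1}^{i+1})$ in the Gaussian slot, where the asymmetry of the $\gamma$ recurrence is forced by the factor $\lambda_{\epsilon}$ that Lemma \ref{Porper} attaches to the $C$ coming from the function itself (the order $i-1$) but not to the $C_{\ast}$ coming from the second derivative. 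The polynomial prefactor $t^{-(n/2)-i}(1+t)^{m}(1+\|H\|)^{A}$ is stable under this averaging because the $\alpha,\beta,\gamma$ exponents of the prefactor are the same for all orders, matching the hypotheses of Lemma \ref{Porper} verbatim.

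The step I expect to be the main obstacle is the bookkeeping needed to apply Lemma \ref{Porper} cleanly across orders: one must verify that the conditions \eqref{conditions Porper}, namely $D\ge D_{\ast}$, $B\ge B_{\ast}$, $C\ge C_{\ast}$, are preserved along the induction. Concretely this requires monotonicity of the exponent sequences in the order index, i.e.\ $\beta_{\ell-1}^{i-1}\ge\beta_{\ell-1}^{i+1}$ and $\gamma_{\ell-1}^{i-1}\ge\gamma_{\ell-1}^{i+1}$ for all relevant $i,\ell$, so that the lower order plays the role of the unstarred (larger) constant and the higher order the starred one. I would prove this monotonicity as a separate lemma, again by induction on $\ell$, using the initial data \eqref{initial conditions}: the sequence starts at $1$ for order $0$ and at $0$ for all higher orders, so it is non-increasing in the order index at level $0$, and one checks the averaging recurrences \eqref{recurrence relations} preserve this monotonicity. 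A second, milder point to handle is the constraint $\alpha>\beta$ in Lemma \ref{Porper}, which here reads $(n/2)+i>m$; this holds for every $i$ because $n/2>m$ by the remark in the proof of Lemma \ref{Step 0 estimates}.

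Finally, once \eqref{sequences estimate} is established for all $\ell$, the desired sharp estimate \eqref{required estimate} of Theorem \ref{Main result} is extracted by sending $\ell\to\infty$: the remaining work (carried out after this lemma) is to show that for fixed $i$ the sequences $\beta_{\ell}^{i}$ and $\gamma_{\ell}^{i}$ converge to $1$ (up to the factor $\lambda_{\epsilon}$, which tends to $1$ as $\epsilon\to 0$), so that the exponents $(1-\epsilon)$ in \eqref{required estimate} can be reached, after which the polynomial factors $(1+t)^{m}(1+\|H\|)^{A}$ are absorbed into the exponential decay at the cost of shrinking the constant $1-\epsilon$ slightly. That convergence analysis is the payoff of the present lemma and is the natural next step, but it lies outside the statement to be proved here; for the lemma itself the double induction above suffices.
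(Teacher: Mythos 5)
Your proposal is correct and takes essentially the same route as the paper: induction on $\ell$, with the base case supplied by Lemma \ref{Step 0 estimates} (for $i\geq 1$, with $\beta_{0}^{i}=\gamma_{0}^{i}=0$) and by \eqref{ankerest} (for $i=0$), and the inductive step obtained by applying Lemma \ref{Porper} to $f_{H}(t)=\partial_{t}^{i-1}h_{t}(\exp H)$, so that the level-$(\ell-1)$ bounds at orders $i-1$ and $i+1$ feed the two hypotheses and the averaged exponents yield exactly \eqref{recurrence relations}. Your extra step of verifying \eqref{conditions Porper} via monotonicity of $\beta_{\ell}^{i},\gamma_{\ell}^{i}$ in the order index (together with the bound by $1$, needed at the transition from $i=0$ to $i=1$) is a detail the paper leaves implicit, and it does go through by the induction you sketch.
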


\begin{proof}
	For every $\ell\in \mathbb{N}$, consider the following statement $L(\ell)$:
	for all $i\in \mathbb{N},$ $\frac{\partial ^{i}h_{t}}{\partial t^{i}}(\exp
	H) $ satisfies the estimate (\ref{sequences estimate}) and the constants $%
	\beta _{\ell}^{i},\gamma _{\ell}^{i}$ appearing in (\ref{sequences estimate}%
	) satisfy the iteration formulas (\ref{recurrence relations}) with
	initial conditions (\ref{initial conditions}), and $\beta_{\ell }^{i-1}\geq \beta_{\ell }^{i+1}$, $\gamma_{\ell }^{i-1}\geq \gamma_{\ell }^{i+1}$. We shall then prove by
	induction, that $L(\ell)$ holds for every $\ell\in \mathbb{N}$.
	
	For $\ell=0$ we have to prove that for all $i\in \mathbb{N},$ $\frac{%
		\partial ^{i}h_{t}}{\partial t^{i}}(\exp H)$ satisfies the estimate (\ref%
	{sequences estimate}) and that the constants $\beta _{0}^{i},\gamma
	_{0}^{i}$ satisfy $\beta _{0}^{i}=\gamma _{0}^{i}=0,\text{ for all }i\geq
	1,\text{ and }\beta _{0}^{0}=\gamma _{0}^{0}=1.$ {Also, we need to show that $\beta_{0 }^{i-1}\geq \beta_{0 }^{i+1}$, $\gamma_{0 }^{i-1}\geq \gamma_{0 }^{i+1}$}.
	
	Indeed, from Lemma \ref{Step 0 estimates} we get that for $i\geq 1$ 
	\begin{equation}
	\left\vert \frac{\partial ^{i}h_{t}}{\partial t^{i}}(\exp {H})\right\vert
	\leq ct^{-(n/2)-i}(1+t)^{m},\text{ for all }t>0,\text{ }H\in \overline{%
		\mathfrak{a}^{+}}.
	\end{equation}%
	But, $1\leq (1+\left\Vert H\right\Vert )^{A}.$ Thus 
	\begin{equation*}
	\left\vert \frac{\partial ^{i}h_{t}}{\partial t^{i}}(\exp {H})\right\vert
	\leq ct^{-(n/2)-i}(1+t)^{m}(1+\left\Vert H\right\Vert )^{A},\text{ for all }%
	t>0,\text{ }H\in \overline{\mathfrak{a}^{+}},
	\end{equation*}
	i.e. (\ref{sequences estimate}) holds true for all $i\geq 1$, with $\beta
	_{0}^{i}=\gamma _{0}^{i}=0.$ Furthermore, from the estimates of the heat
	kernel in (\ref{ankerest}), we obtain that 
	\begin{equation*}
	\left\vert h_{t}(\exp {H})\right\vert \leq ct^{-n/2}(1+t)^{m}(1+\left\Vert
	H\right\Vert )^{A}e^{-\left( \left\Vert \rho \right\Vert ^{2}t+\langle \rho
		,H\rangle +\left\Vert H\right\Vert ^{2}/4t\right) }.
	\end{equation*}%
	Thus (\ref{sequences estimate}) holds true also for $i=0$ and $\beta_{0}^{0}=\gamma _{0}^{0}=1.$ {Last, from Lemma \ref{Step 0 estimates}, for $i\geq 1$, we have $\beta_{0 }^{i-1}\geq 0 =\beta_{0 }^{i+1}$, $\gamma_{0 }^{i-1}\geq 0= \gamma_{0 }^{i+1}$.} Therefore the statement $L(0)$ holds true.
	
	Let us assume now that $L(\ell-1)$ holds true. Thus, for all $i\in \mathbb{N}
	$, there are constants $c,\;\beta _{\ell-1}^{i},\;\gamma _{\ell-1}^{i}>0$ such
	that $\frac{\partial ^{i}h_{t}}{\partial t^{i}}(\exp H)$ satisfies the
	estimate (\ref{sequences estimate}). In addition, {$\beta_{\ell -1 }^{i-1}\geq \beta_{\ell -1 }^{i+1}$, $\gamma_{\ell -1 }^{i-1}\geq \gamma_{\ell -1 }^{i+1}.$}
	
	We shall prove that $L(\ell)$ holds true. Indeed, from the estimates of the
	heat kernel in (\ref{ankerest}), we have that 
	\begin{equation*}
	|h_{t}(\exp {H})|\leq ct^{-n/2}(1+t)^{m}(1+\left\Vert H\right\Vert
	)^{A}e^{-\left( \left\Vert \rho \right\Vert ^{2}t+\langle \rho ,H\rangle
		+\left\Vert H\right\Vert^{2}/4t\right) }.
	\end{equation*}%
	Thus (\ref{sequences estimate}) holds true for $i=0$ with $\beta
	_{\ell}^{0}=\gamma _{\ell}^{0}=1.$
	
	For $i\geq 1$, consider the function 
	\begin{equation*}
	f_{H}(t)=\frac{\partial ^{i-1}h_{t}}{\partial t^{i-1}}(\exp {H}).
	\end{equation*}%
	From the validity of $L\left( \ell-1\right) $, we get that for $i-1$ and $%
	i+1 $ we have that 
	\begin{equation*}
	\begin{split}
	\left\vert f_{H}(t)\right\vert =  \left\vert \frac{\partial ^{i-1}h_{t}}{%
		\partial t^{i-1}}(\exp {H})\right\vert &\leq  ct^{-\alpha }(1+t)^{\beta
	}(1+\|H\|)^{\gamma }\\
	&\times e^{-Dt-B\langle \rho ,H\rangle +C{\left\Vert H\right\Vert
			^{2}}/{(4t)}},  \\
	\left\vert \frac{d^{2}f_{H}}{dt^{2}}(t)\right\vert =  \left\vert \frac{%
		\partial ^{i+1}h_{t}}{\partial t^{i+1}}(\exp {H})\right\vert &\leq 
	ct^{-\alpha -2}(1+t)^{\beta }(1+\|H\|)^{\gamma }\\
	&\times e^{-D_{\ast }t-B_{\ast }\langle
		\rho ,H\rangle +C_{\ast }{\left\Vert H\right\Vert ^{2}}/{(4t)}},
	\end{split}
	\end{equation*}
	with $\alpha =(n/2)+i-1,$ $\beta =m,$ $\gamma =A,$ $D=B=\beta
	_{\ell-1}^{i-1},$ $C=\gamma _{\ell-1}^{i-1}$ and $D_{\ast }=B_{\ast }=\beta
	_{\ell-1}^{i+1},$ $C_{\ast }=\gamma _{\ell-1}^{i+1}$. {Note that $B\geq B_{\ast}$ and $C\geq C_{\ast}$, from $L(\ell- 1)$.}
	
	Thus, by Lemma \ref{Porper}, applied for the function $f_{H}(t)$, it follows
	that 
	\begin{equation*}
	\begin{split}
	\left\vert \frac{df_{H}}{dt}(t)\right\vert =\left\vert \frac{\partial
		^{i}h_{t}}{\partial t^{i}}(\exp {H})\right\vert &\leq 
	ct^{-(n/2)-i}(1+t)^{m}(1+\left\Vert H\right\Vert )^{A}\\
	&\times e^{-\beta _{\ell
		}^{i}\left( \left\Vert \rho \right\Vert ^{2}t+\langle \rho ,H\rangle \right)
	}e^{-\gamma _{\ell }^{i}{\left\Vert H\right\Vert ^{2}}/{(4t)}},
	\end{split}
	\end{equation*}%
	for all $i\geq 1,$ where $\beta _{\ell }^{i}$ and $\gamma _{\ell }^{i}$
	satisfy (\ref{recurrence relations}). {Finally, from $L(\ell -1)$ and (\ref{recurrence relations})  it is straightforward that $\beta _{\ell}^{i-1}\geq \beta_{\ell }^{i+1}$ and $\gamma _{\ell}^{i-1}\geq \gamma _{\ell}^{i+1}$ }. Thus, the statement $L(\ell )$ is
	valid and the proof of the lemma is complete.
\end{proof}

\begin{remark}
	The constant $c=c(i,\ell,\epsilon)$ in relation (\ref{sequences estimate})
	of Lemma \ref{Technical Lemma} depends on $i, \ell$ and $\epsilon$ and it
	increases to infinity (when either $i\rightarrow \infty$ or $\ell\rightarrow
	\infty$ or $\epsilon\rightarrow 0$), but we only need the fact that it is
	finite for fixed $i,\ell, \epsilon$.
\end{remark}

Finally, we shall show that the estimates obtained are  precise, by proving that if $\ell\rightarrow \infty$ and $\epsilon \rightarrow 0$ then the exponents $\gamma_{\ell}^i $ and $\beta_{\ell}^i $ converge to 1. More precisely, we shall prove the following result. 

\begin{lemma}
	For any $i\in \mathbb{N},$ 
	\begin{equation}  \label{sequence convergence}
	\lim_{\ell\rightarrow \infty}\gamma_{\ell}^i = \left( 1 - \sqrt{1 -\lambda
		{}_{\epsilon}} \right)^i \text{ and } \lim_{\ell\rightarrow
		\infty}\beta_{\ell}^i = 1.
	\end{equation}
\end{lemma}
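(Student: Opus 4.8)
The plan is to treat each of the two sequences $\{\beta_\ell^i\}$ and $\{\gamma_\ell^i\}$ by the same three moves: first show that, for fixed $i$, the sequence is nondecreasing in $\ell$ and bounded, so that the limit exists; then pass to the limit in the iteration formulas \eqref{recurrence relations} to obtain a \emph{stationary} second-order linear recurrence in the index $i$; and finally solve that recurrence, using boundedness together with the boundary value at $i=0$ to single out the admissible solution. Throughout I assume $0<\epsilon<1$, so that $\lambda_\epsilon=\tfrac{1-\epsilon}{1+\epsilon}\in(0,1)$.

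First I would establish monotonicity in $\ell$. From the initial data \eqref{initial conditions} one checks directly that $\beta_1^i\ge\beta_0^i$ and $\gamma_1^i\ge\gamma_0^i$ for every $i$ (e.g. $\gamma_1^1=\tfrac12\lambda_\epsilon>0=\gamma_0^1$, with equality everywhere else). Then, assuming $\gamma_\ell^i\ge\gamma_{\ell-1}^i$ for all $i$, the recurrence $\gamma_\ell^i=\tfrac12(\lambda_\epsilon\gamma_{\ell-1}^{i-1}+\gamma_{\ell-1}^{i+1})$ together with $\lambda_\epsilon>0$ gives $\gamma_{\ell+1}^i\ge\gamma_\ell^i$ for $i\ge1$, while $\gamma_\ell^0\equiv1$ handles $i=0$; the identical argument applies to $\beta_\ell^i$. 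For boundedness, a one-line induction gives $0\le\gamma_\ell^i\le1$ (since $\tfrac12(\lambda_\epsilon\cdot1+1)\le1$) and likewise $0\le\beta_\ell^i\le1$. Hence the limits $b_i:=\lim_{\ell\to\infty}\beta_\ell^i$ and $g_i:=\lim_{\ell\to\infty}\gamma_\ell^i$ exist and lie in $[0,1]$.

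Passing to the limit $\ell\to\infty$ in \eqref{recurrence relations} yields, for $i\ge1$, the stationary recurrences $g_{i+1}-2g_i+\lambda_\epsilon g_{i-1}=0$ and $b_{i+1}-2b_i+b_{i-1}=0$, together with the boundary values $g_0=b_0=1$ coming from $\gamma_\ell^0=\beta_\ell^0=1$. The characteristic equation for $g$ is $r^2-2r+\lambda_\epsilon=0$, with roots $r_{\pm}=1\pm\sqrt{1-\lambda_\epsilon}$; since $0<\lambda_\epsilon<1$ we have $r_-\in(0,1)$ and $r_+>1$, so the general solution $g_i=A\,r_+^i+B\,r_-^i$ stays bounded only if $A=0$, and then $g_0=1$ forces $B=1$, giving $g_i=\bigl(1-\sqrt{1-\lambda_\epsilon}\bigr)^i$. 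For $\beta$ the characteristic equation $r^2-2r+1=0$ has the double root $r=1$, so $b_i=A+Bi$; boundedness forces $B=0$ and $b_0=1$ gives $b_i=1$. This is exactly \eqref{sequence convergence}.

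The one point requiring care, and the step I would be most deliberate about, is the passage from the finite-$\ell$ recurrence to the stationary one: it is legitimate only because the limits are already known to exist, which is precisely why the monotonicity-and-boundedness argument must precede it. As motivation (but not as the formal proof) it is worth noting that $\beta_\ell^i$ and $\gamma_\ell^i$ are respectively the probability and the $\lambda_\epsilon$-weighted expectation that a symmetric nearest-neighbour walk started at $i$ first reaches $0$ within $\ell$ steps; recurrence of the walk on $\mathbb{Z}$ explains $b_i=1$, and evaluating the first-passage generating function at $\sqrt{\lambda_\epsilon}$ reproduces $g_i=\bigl(1-\sqrt{1-\lambda_\epsilon}\bigr)^i$. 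I would record this interpretation briefly but carry out the elementary analytic argument above for the actual proof.
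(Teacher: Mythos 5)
Your proof is correct and takes essentially the same route as the paper's: monotonicity in $\ell$ together with the bound $0\le\gamma_\ell^i\le 1$ (the paper's Claims 1 and 2) gives existence of the limits, and then passing to the limit in \eqref{recurrence relations} produces the stationary recurrence whose characteristic roots $1\pm\sqrt{1-\lambda_\epsilon}$, combined with boundedness and the boundary value at $i=0$, single out $\gamma_i=\bigl(1-\sqrt{1-\lambda_\epsilon}\bigr)^i$, exactly as in the paper's Claim 3. The only differences are cosmetic: you carry out explicitly the $\beta$ case (double root $r=1$, so $b_i=A+Bi$ with $B=0$ by boundedness), which the paper dismisses as ``similar,'' and you add a correct first-passage random-walk interpretation as motivation.
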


\begin{proof}
	We shall deal only with $\gamma_{\ell}^i$. The proof that $%
	\lim_{\ell\rightarrow \infty}\beta_{\ell}^i = 1$ is similar, and we shall
	omit it.
	
	\textit{Claim 1.} For every $\ell \in \mathbb{N}$ consider the following
	statement $L(\ell )$: for all $i\in \mathbb{N},$ 
	\begin{equation}
	\gamma _{\ell }^{i}\leq 1.
	\end{equation}%
	We shall prove by induction that $L(\ell )$ is valid for all $\ell \in 
	\mathbb{N}$.
	
	For $\ell=0$ we have to prove that for all $i\in \mathbb{N},$ we have that $%
	\gamma _{0}^{i}\leq 1.$ Indeed, this is a consequence of the initial
	conditions $\gamma _{0}^{i}=0$ and $\gamma _{0}^{0}=1.$ Thus $L(0)$ holds
	true.
	
	Let us assume now that $L(\ell-1)$ holds true. Thus, for all $i \in \mathbb{N%
	},$ we have that $\gamma_{\ell-1}^i \leq 1.$
	
	We shall prove that $L(\ell)$ holds true. Recall that by the induction
	assumption, for all $i\in \mathbb{N}$, for $i-1$ and $i+1$ we have that $%
	\gamma _{\ell-1}^{i-1}\leq 1$ and $\gamma _{\ell-1}^{i+1}\leq 1$. Thus, from
	(\ref{recurrence relations}) it follows that 
	\begin{equation*}
	\gamma _{\ell}^{i}=\frac{\lambda _{\epsilon }}{2}\gamma _{\ell-1}^{i-1}+%
	\frac{1}{2}\gamma _{\ell-1}^{i+1}\leq \frac{\lambda _{\epsilon }}{2}+\frac{1%
	}{2}\leq 1,
	\end{equation*}%
	thus the statement $L(\ell)$ is valid and this completes the proof of Claim
	1.
	
	\textit{Claim 2.} For every $\ell \in \mathbb{N}$ consider the following
	statement $L(\ell )$: for all $i\in \mathbb{N},$ 
	\begin{equation}
	\gamma _{\ell }^{i}\leq \gamma _{\ell +1}^{i}.
	\end{equation}%
	We shall prove that $L(\ell )$ is valid for all $\ell \in \mathbb{N}$. We
	proceed once again by induction in $\ell \in \mathbb{N}.$
	
	For $\ell=0$ we have to prove that for all $i\in \mathbb{N},$ $\gamma
	_{0}^{i}=0\leq \gamma _{1}^{i}.$ Indeed, from (\ref{initial conditions}) it
	follows that $\gamma _{0}^{i}=0\leq \gamma _{1}^{i},$ for all $i>0$ and $%
	\gamma _{0}^{0}=1=\gamma _{1}^{0}.$ Therefore the statement $L(0)$ holds
	true.
	
	Let us assume now that $L(\ell-1)$ holds true, i.e. that for all $i\in 
	\mathbb{N},$ $\gamma _{\ell-1}^{i}\leq \gamma _{\ell}^{i}.$
	
	We shall prove that $L(\ell)$ holds true, i.e. that for all $i\in \mathbb{N}%
	, $ $\gamma _{\ell}^{i}\leq \gamma _{\ell+1}^{i}.$ Recall that by (\ref%
	{recurrence relations}) we have that 
	\begin{equation}
	\gamma _{\ell}^{i}=\frac{1}{2}(\lambda _{\epsilon }\gamma
	_{\ell-1}^{i-1}+\gamma _{\ell-1}^{i+1}).  \label{itfor}
	\end{equation}%
	Then by the induction assumption for $i-1$ and $i+1$ we have that $\gamma
	_{\ell-1}^{i-1}\leq \gamma _{\ell}^{i-1}$ and $\gamma _{\ell-1}^{i+1}\leq
	\gamma _{\ell}^{i+1}.$ Hence, from (\ref{itfor}) we get that 
	\begin{equation*}
	\gamma _{\ell}^{i}\leq \frac{1}{2}(\lambda _{\epsilon }{\gamma _{\ell}}%
	^{i-1}+\gamma _{\ell}^{i+1})=\gamma _{\ell+1}^{i}.
	\end{equation*}%
	Thus the statement $L(\ell)$ is valid and the proof of Claim 2 is complete.
	
	\textit{Claim 3.} For all $i\in \mathbb{N},$ 
	\begin{equation}  \label{claim3}
	\lim_{\ell\rightarrow \infty}\gamma_{\ell}^i = \left( 1 - \sqrt{1 -\lambda
		{}_{\epsilon}} \right)^i .
	\end{equation}
	
	Note that by Claim 2, the sequence $\gamma _{\ell }^{i}$ is increasing in $%
	\ell $ and by Claim 1, $\gamma _{\ell }^{i}$ is bounded above. Thus $%
	\lim_{\ell \rightarrow \infty }\gamma _{\ell }^{i}$ exists and since $0\leq
	\gamma _{\ell }^{i}\leq 1,$ then 
	\begin{equation*}
	\lim_{\ell \rightarrow \infty }\gamma _{\ell }^{i}=\gamma _{i}\leq 1.
	\end{equation*}%
	Note that $\gamma _{\ell }^{0}=1$, for all $\ell \in \mathbb{N},$ thus $%
	\gamma _{0}=1.$
	
	Now, taking limits in the iteration formula (\ref{itfor}) we obtain that 
	\begin{equation*} 
	\gamma _{i}=\frac{1}{2}(\lambda _{\epsilon}\gamma _{i-1}+\gamma _{i+1}),
	\end{equation*}%
	thus 
	\begin{equation*}
	\gamma _{i+1}-2\gamma _{i}+\lambda _{\epsilon }\gamma _{i-1}=0.
	\end{equation*}%
	This is a homogeneous linear recurrence relation with constant coefficients
	and the solutions of this equation are given by 
	\begin{equation*}
	\gamma _{i}=C_{1}r_{1}^{i}+C_{2}r_{2}^{i},\text{ \ }C_{1},C_{2}\in \mathbb{R}%
	,
	\end{equation*}%
	where $r_{1},r_{2}$ are the roots of the equation 
	\begin{equation*}
	r^{2}-2r+\lambda _{\epsilon }=0.
	\end{equation*}%
	Thus, we conclude that 
	\begin{equation}
	\gamma _{i}=C_{1}\left( 1-\sqrt{1-\lambda {}_{\epsilon}}\right)
	^{i}+C_{2}\left( 1+\sqrt{1-\lambda {}_{\epsilon}}\right) ^{i},
	\label{gammai}
	\end{equation}%
	for some $C_{1},C_{2}\in \mathbb{R}.$
	
	Since $0\leq \gamma _{i}\leq 1$, we get $C_{2}=0,$ otherwise $%
	\lim_{i\rightarrow \infty }\gamma _{i}=\infty $. Also, since $\gamma _{0}=1$%
	, we get $C_{1}=1.$ Thus, from (\ref{gammai}) for $C_{1}=1,$ $C_{2}=0,$ we
	get (\ref{claim3}) and the proof is complete.
\end{proof}

\textit{End of the proof of Theorem \ref{Main result}}: To complete the
proof of Theorem \ref{Main result}, notice that $\lim_{\epsilon\rightarrow 0}\left( 1-\sqrt{1-\lambda {}_{\epsilon}}\right) ^{i}=1$%
. Thus, taking $\ell\in \mathbb{N}$ sufficiently large and $\epsilon$
sufficiently close to zero, one has $\gamma _{\ell}^{i}\geq 1-\epsilon $ and 
$\beta _{\ell}^{i}\geq 1-\epsilon .$ Thus, from (\ref{sequences estimate})
and (\ref{sequence convergence}) it follows that 
\begin{equation*}
\left\vert \frac{\partial ^{i}h_{t}}{\partial t^{i}}(x,y)\right\vert \leq
ct^{-(n/2)-i}(1+t)^{m}(1+\left\Vert H\right\Vert )^{A}e^{-(1-\epsilon
	)\left( \left\Vert \rho \right\Vert ^{2}t+\langle \rho ,H\rangle +\left\Vert
	H\right\Vert ^{2}/(4t)\right) }.
\end{equation*}%
Taking now into account that if $\alpha,\beta >0$, then there exists a
constant $c=c(\alpha ,\beta )$ such that $x^{\alpha }\leq ce^{\beta x}$ for
all $x>0,$ we conclude that for every $\epsilon >0,$ there exists a constant 
$c>0$ such that 
\begin{equation*}
\left\vert \frac{\partial ^{i}h_{t}}{\partial t^{i}}(x,y)\right\vert \leq
ct^{-(n/2)-i}e^{-(1-\epsilon )\left( \left\Vert \rho \right\Vert
	^{2}t+\langle \rho ,H\rangle +\left\Vert H\right\Vert ^{2}/(4t)\right) },
\end{equation*}%
and the proof of Theorem \ref{Main result} is complete.

\begin{remark}
	If $X$ is a Cartan-Hadamard manifold then, \cite{GR3}, the heat kernel $%
	h_{t} $ of $X$ satisfies  pointwise bounds of the type 
	\begin{equation*}
	h_{t}(x,y)\leq \frac{c}{\min\{1,t^{\alpha }\}}e^{-At-Bd(x,y)-C{d^{2}(x,y)}/{t}%
	},\text{ }t>0,\text{ }x,y\in X,
	\end{equation*}%
	for some positive constants $c,A,B,C$ and $\alpha $. Proceeding as in
	Section \ref{sectionsym}, one can prove the following
	estimate: for all $\epsilon\in(0,1)$ and $i\in \mathbb{N},$ there is a constant $%
	c>0$ such that 
	\begin{equation*}
	\left\vert \frac{\partial ^{i}h_{t}}{\partial t^{i}}(x,y)\right\vert \leq 
	\frac{c}{\min\{1,t^{\alpha +i}\}}e^{-(1-\epsilon )\left( At+Bd(x,y)+{%
			Cd^{2}(x,y)}/{t}\right) }.
	\end{equation*}
\end{remark}

\section{Applications}\label{applications}

\subsection{Estimates of the gradient of the heat kernel on symmetric spaces}

As a direct application of Theorem \ref{Main result}, we obtain the gradient
estimates of $h_{t}$ given in Corollary \ref{Gradient estimate}.

\begin{proof}[Proof of Corollary \ref{Gradient estimate}.] Let us recall that if $%
	X $ is a complete, non-compact, $n$-dimensional Riemannian manifold, with
	Ricci curvature bounded from below by $-R^{2}$, then by \cite{LIYAU} for $%
	\gamma >1$, we have that 
	\begin{equation}  \label{gradient equation}
\left\Vert \nabla h_t (x, y) \right\Vert^2 \leq h_t^2 (x, y) \left( \frac{n
	R^2 \gamma^2}{\sqrt{2} (\gamma - 1)} + \frac{n \gamma^2}{2 t} \right) +\gamma h_t
(x, y) \left| \frac{\partial h_t (x, y)}{\partial t}\right|.
\end{equation}
	\text{for all }$t>0,\;x,y\in X.$ 

Using the heat kernel estimate (\ref{ankerest}) as well as Theorem \ref%
	{Main result}, and inequality (\ref{gradient equation}), the result follows.
\end{proof}

\subsection{Estimates of the time derivatives of the heat kernel on locally symmetric spaces}\label{Locally Symmetric Space}

In this section we obtain estimates of the heat kernel time derivatives in
the case of a locally symmetric space $M = \Gamma \backslash X$, and prove Theorem \ref{Locally symmetric 2}.  Our results
extend the estimates of Weber (see \cite{WE}).

Recall that
\begin{equation*}
\rho _{m}=\min_{H\in \overline{\mathfrak{a}_{+}},\text{ }\left\Vert
	H\right\Vert =1}\langle \rho ,H\rangle .
\end{equation*}
Suppose that $\delta (\Gamma )<\left\Vert\rho \right\Vert+\rho _{m}$. 

The heat
kernel $h_{t}^{M}(\tilde{x},\tilde{y})$ on $M$ is given by the formula 
\begin{equation}
h_{t}^{M}(\tilde{x},\tilde{y})=\sum_{\gamma \in \Gamma }h_{t}(x,\gamma
y),\,\,\,\text{ for all }x,y\in X,\text{ \ }t>0,  \label{summation formula}
\end{equation}%
\cite{WE}.

Recall also
that
\begin{equation*}
d_M (\tilde{x}, \tilde{y}) = \inf_{\gamma\in \Gamma} d (x, \gamma y) .
\end{equation*}
Consider $\alpha _{2}\in (\delta (\Gamma ),\left\Vert\rho \right\Vert+\rho
_{m})$ and $\alpha _{1},\alpha _{3}\in [0,1]$ such that \[\alpha _{1}\alpha
_{3}\in \left[ \left( \frac{\alpha _{2}-\rho _{m}}{\left\Vert\rho \right\Vert%
}\right) ^{2},1\right].\]
We shall now prove Theorem \ref{Locally symmetric 2}.

\begin{proof}[Proof of Theorem \ref{Locally symmetric 2}.]
	According to Theorem \ref{Main result}, 
	\begin{equation*}
	\left|\frac{\partial ^{i}h_{t}}{\partial t^{i}}(\exp H)\right|\leq ct^{-{%
			(n/2)}-i}e^{-(1-\epsilon )\left( \left\Vert\rho \right\Vert^{2}t+\langle
		\rho ,H\rangle +{\left\Vert H\right\Vert{}^{2}}/{(4t)}\right) }.
	\end{equation*}%
	Thus, taking into account that $d(x,y)=\left\Vert H\right\Vert, $ as well as that $\langle \rho, H \rangle \geq \rho_{m}\|H\|$, we obtain
	the estimate 
	\begin{equation}
	\left|\frac{\partial ^{i}h_{t}}{\partial t^{i}}(x,y)\right| \leq ct^{-{(n/2)}%
		-i}e^{-(1-\epsilon)\left( \left\Vert\rho \right\Vert^{2}t+\rho
		_{m}d(x,y)+{d^2(x,y)}/{(4t)}\right) }.  \label{eqn locally symmetric}
	\end{equation}
	Note that 
	\begin{equation}  \label{eqn quadratic}
	\alpha _{1}\left\Vert\rho \right\Vert^{2}t+(\rho _{m}-\alpha _{2})d_{M}(%
	\tilde{x},\tilde{y})+\alpha _{3}\frac{d_{M}^{2}(\tilde{x},\tilde{y})}{4t}
	\geq 0,
	\end{equation}%
	since $\alpha _{1}\alpha _{3}\in \left[ \left( \frac{\alpha _{2}-\rho _{m}}{%
		\left\Vert\rho \right\Vert}\right) ^{2},1\right], $ with equality when $%
	\alpha_{2}=\rho_{m}>\delta(\Gamma)$ and $\alpha_{1}=\alpha_{3}=0$.
	
	Thus, from (\ref{eqn locally symmetric}) and (\ref{eqn quadratic}) it
	follows that 
	\begin{align}
	\left|\frac{\partial ^{i}h_{t}}{\partial t^{i}}(x,y)\right| &\leq  ct^{-{%
			(n/2)}-i}e^{-(1-\epsilon)\left( (1-\alpha _{1})\left\Vert\rho
		\right\Vert^{2}t+ \alpha_{2}d(x,y)+(1-\alpha _{3}){d^{2}(x,y)}/{(4t)}\right)} \notag
	\\
	&= ct^{-{(n/2)}-i}e^{-(1-\epsilon )\left( (1-\alpha _{1})\left\Vert\rho
		\right\Vert^{2}t+ (\alpha_{2}-{(\delta (\Gamma)+\epsilon)}/{(1-\epsilon)}%
		)d(x,y)+(1-\alpha _{3}){d^{2}(x,y)}/{(4t)}\right)} \notag \\
	& \times e^{-(\delta (\Gamma)+\epsilon)d(x,y)}.
	\end{align}
	A summation argument implies that for every $\epsilon\in (0,1)$ we have 
	\begin{equation*}
	\begin{split}
	\left| \frac{\partial ^{i}h_{t}^{M}}{\partial t^{i}}(\tilde{x},\tilde{y})
	\right| &\leq  ct^{-(n/2)-i}e^{-(1-\epsilon)\left( (1-\alpha_{1}
		)\left\Vert\rho \right\Vert^{2}t+(\alpha_{2}-\delta(\Gamma))d_{M}(\tilde{x},%
		\tilde{y})+(1-\alpha_{3} )d_{M}^{2}(\tilde{x},\tilde{y})/(4t) \right)} \\
	& \times P_{\epsilon +\delta (\Gamma )}(x,y),
	\end{split}%
	\end{equation*}
	and the proof of Theorem \ref{Locally symmetric 2} is complete.
\end{proof}
\begin{remark}
	Recall that the kernel $p_{t}^{M}$ of the Poisson semigroup $e^{-t\sqrt{%
			-\Delta }}$ on a Riemannian manifold $M$ is given by the following
	subordination formula: 
	\begin{equation}\label{subord}
	p_{t}^{M}(\tilde{x},\tilde{y})=\frac{t}{2\sqrt{\pi }}\int_{0}^{\infty
	}u^{-3/2}e^{-t^{2}/4u}h_{u}^{M}(\tilde{x},\tilde{y})du,
	\end{equation}%
	\cite[p.1075]{ANJ}, where $h_{u}^{M}(\tilde{x},\tilde{y})$ is the heat kernel of $M$. 
	Using the estimates of Theorem \ref{Locally symmetric 2}, we obtain the following pointwise estimates for the Poisson kernel 
	\begin{equation}\label{poisson}
	\begin{split}
	p_{t}^{M}(\tilde{x},\tilde{y})& \leq \frac{ct}{\left( d_{M}^{2}(\tilde{x},\tilde{y})(1-\alpha
		_{3})+\frac{t^{2}}{1-\epsilon}\right) ^{(n+1)/2}}P_{\epsilon +\delta (\Gamma )}(x,y) \\
	& \times e^{-\left( 1-\epsilon \right) \left( (\alpha _{2}-\delta (\Gamma
		))d_{M}(\tilde{x},\tilde{y})+\Vert \rho \Vert \sqrt{1-\alpha _{1}}\sqrt{d_{M}^{2}(\tilde{x},\tilde{y})(1-%
			\alpha _{3})+\frac{t^2}{1-\epsilon}}\right) },
	\end{split}%
	\end{equation}
	\text{for all }$\tilde{x},\tilde{y}\in M,$ $t>0.$ 
	Similarly, one can prove that the kernel 
	\begin{equation*}
	r_{s}^{M}\left( \tilde{x},\tilde{y}\right) =\Gamma (s)^{-1}\int_{0}^{\infty
	}t^{s-1}h_{t}^{M}\left( \tilde{x},\tilde{y}\right) dt\text{, }s>0,
	\end{equation*}
	satisfies
	\begin{equation*}
	r_{s}^{M}(\tilde{x},\tilde{y})\leq  cP_{\epsilon +\delta (\Gamma )}(x,y)e^{-\left( 1-\epsilon \right) \left( (\alpha
		_{2}-\delta (\Gamma ))+\Vert \rho \Vert \sqrt{(1-\alpha _{1})(1-\alpha _{3})}
		\right) d_{M}(\tilde{x},\tilde{y})}	\end{equation*}%
	 for all $\tilde{x},\tilde{y}\in M$, if $d_{M}(\tilde{x},\tilde{y})>1$, and 
	\begin{equation*}
	r_{s}^{M}(\tilde{x},\tilde{y})\leq cP_{\epsilon +\delta (\Gamma )}(x,y)%
	\begin{cases}
	d_{M}(\tilde{x},\tilde{y})^{2s-n},\quad &\text{if}\quad s<n/2, \\ 
	-\log {d_{M}(\tilde{x},\tilde{y})},\quad &\text{if}\quad s=n/2, \\ 
	1,\quad &\text{if}\quad s>n/2,%
	\end{cases}
	\end{equation*}
	for all $\tilde{x},\tilde{y}\in M$, if $d_{M}(\tilde{x},\tilde{y})\leq 1.$
\end{remark}

\subsection{Functions of the Laplacian}

\label{Functions of the Laplacian}

In this section we apply the estimates of the derivatives of the heat
kernel and we obtain the $L^{p}$-boundedness of some operators related to
the heat semigroup.

\subsubsection{Proof of Theorem \protect\ref{maximal}}

We shall consider separately the \textit{small time} operator
\begin{equation*}
{H}_{\sigma }^{0}(f)(x):=\left( \int_{0}^{1 }e^{2\sigma t}\left(t^{2i-1} \left\vert
\frac{\partial ^{i}}{\partial t^{i}}H_{t}f(x)\right\vert ^{2}+\|\nabla
_{x}H_{t}f(x)\|^{2}\right)   \right) ^{1/2},
\end{equation*}
and the \textit{large time} operator
\begin{equation*}
{H}_{\sigma }^{\infty}(f)(x):=\left( \int_{1}^{\infty }e^{2\sigma t}\left( t^{2i-1}\left\vert
\frac{\partial ^{i}}{\partial t^{i}}H_{t}f(x)\right\vert ^{2}+\|\nabla
_{x}H_{t}f(x)\|^{2}\right)   \right) ^{1/2}.
\end{equation*}
As noted in \cite[p.276]{AN1}, the whole problem comes from the component $H_{\sigma}^{\infty}.$

Note that if $f\in C_0^{\infty}(M)$, then $H_t f=H_t^X f$, where $H_t^X$ is the heat semigroup on $X$, see for example \cite[Lemma 5]{LOMAjga}.

Let
\[
k_{\sigma}^{\infty}(x)=\left( \int_{1}^{\infty }e^{2\sigma t}\left( t^{2i-1}\left\vert
\frac{\partial ^{i}}{\partial t^{i}}h_{t}(x)\right\vert ^{2}+\|\nabla
_{x}h_{t}(x)\|^{2}\right)   \right) ^{1/2}.
\]

Then, the component $H_{\sigma}^{\infty }$ can be handled by estimating
\begin{equation}\label{convinfty}
H_{\sigma}^{\infty }(f)\leq |f| \ast  k_{\sigma}^{\infty}
\end{equation}
and applying the Kunze-Stein phenomenon.

To be more precise, Theorem \ref{Main result} and the gradient estimates of Corollary \ref{Gradient estimate} imply the following upper bound.

\begin{lemma}
	\label{global_lemma} For all $\epsilon\in(0,1)$, there exists $c>0$ such that 
	\begin{equation*}
	\left\vert {k}_{\sigma }^{\infty }(\exp {H})\right\vert \leq ce^{-(1-\epsilon )\langle \rho
		,H\rangle}e^{-(1-\epsilon )\left\Vert H\right\Vert \sqrt{\left\Vert \rho
			\right\Vert ^{2}-\frac{\sigma }{1-\epsilon }}},
	\end{equation*}%
	for all $H\in \overline{\mathfrak{a}^{+}}.$
\end{lemma}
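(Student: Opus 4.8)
The goal is to bound the suprema $k_\sigma^\infty$ and $k_\sigma^{0,\infty}$, which are built from the time derivatives $\partial_t^i h_t(\exp H)$ with the prefactor $e^{\sigma t}t^i$. The plan is to feed the pointwise estimate of Theorem~\ref{Main result} into these suprema and then optimize over $t$. Concretely, by Theorem~\ref{Main result}, for every $\epsilon>0$ there is a constant $c>0$ with
\begin{equation*}
\left\vert t^i\frac{\partial^i h_t}{\partial t^i}(\exp H)\right\vert \leq c\, t^{-n/2}\,e^{-(1-\epsilon)\left(\|\rho\|^2 t + \langle\rho,H\rangle + \|H\|^2/(4t)\right)}.
\end{equation*}
Multiplying by $e^{\sigma t}$ and extracting the factor $e^{-(1-\epsilon)\langle\rho,H\rangle}$, the remaining $t$-dependence to maximize over is
\begin{equation*}
F(t)=t^{-n/2}\exp\!\left(-\bigl((1-\epsilon)\|\rho\|^2-\sigma\bigr)t-(1-\epsilon)\frac{\|H\|^2}{4t}\right).
\end{equation*}
First I would observe that since $k_\sigma^\infty$ only involves $t\geq 1$ and $k_\sigma^{0,\infty}$ only involves the region $\|H\|\geq$ const (the support of $1-\psi$, where $\exp H \notin B(0,2)$), in both cases the polynomial factor $t^{-n/2}$ is harmless: on $t\ge 1$ it is bounded, and it can always be absorbed into the exponential at the cost of worsening $\epsilon$ slightly, using the inequality $x^\alpha\le c\,e^{\beta x}$ for $x>0$ that the authors have already invoked twice.

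The heart of the argument is the optimization of the exponent $-at-(1-\epsilon)\|H\|^2/(4t)$ over $t>0$, where $a=(1-\epsilon)\|\rho\|^2-\sigma$. Writing $a=(1-\epsilon)\bigl(\|\rho\|^2-\tfrac{\sigma}{1-\epsilon}\bigr)$, the product $at\cdot(1-\epsilon)\|H\|^2/(4t)$ is independent of $t$, so the AM--GM inequality gives the minimum of the exponent at $t_\ast=\tfrac{\|H\|}{2}\sqrt{(1-\epsilon)/a}$, with minimal value
\begin{equation*}
-2\sqrt{a\cdot(1-\epsilon)\|H\|^2/4}=-(1-\epsilon)\|H\|\sqrt{\|\rho\|^2-\frac{\sigma}{1-\epsilon}}.
\end{equation*}
This is exactly the exponent claimed in the lemma. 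Thus taking the supremum over $t$ produces the bound
\begin{equation*}
|k_\sigma^\infty(\exp H)|\le c\,e^{-(1-\epsilon)\langle\rho,H\rangle}e^{-(1-\epsilon)\|H\|\sqrt{\|\rho\|^2-\sigma/(1-\epsilon)}},
\end{equation*}
after absorbing the polynomial factor as above.

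The main obstacle I anticipate is ensuring the quantity under the square root stays positive and that the optimizer lies in the admissible range of $t$. Positivity of $\|\rho\|^2-\sigma/(1-\epsilon)$ requires $\sigma<(1-\epsilon)\|\rho\|^2$; since the hypotheses of Theorem~\ref{maximal} bound $\sigma$ well below $\|\rho\|^2$ (note $s(p)(\|\rho\|-\|\eta_\Gamma\|)(2\|\rho\|-\cdots)<\|\rho\|^2$ because $\|\eta_\Gamma\|>0$ and $s(p)\le 1$), one can indeed choose $\epsilon$ small enough for $a>0$, and this is where the precise bound on $\sigma$ gets used. For $k_\sigma^\infty$, the constraint $t\ge 1$ means the unconstrained optimizer $t_\ast$ may fall below $1$ when $\|H\|$ is small; there I would argue separately that $t_\ast\ge 1$ holds once $\|H\|$ is large, and for bounded $\|H\|$ the desired exponential bound is trivial since the right-hand side is bounded below by a positive constant while the left-hand side is controlled by Lemma~\ref{Step 0 estimates}. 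The treatment of $k_\sigma^{0,\infty}$ is analogous, with the roles reversed: there $t<1$ but $\|H\|$ is bounded away from $0$ on the support of $1-\psi$, which keeps $t_\ast$ inside $(0,1)$ and again makes the optimization legitimate.
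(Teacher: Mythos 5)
Your proposal follows essentially the same route as the paper's proof: insert the bound of Theorem \ref{Main result} into the supremum, absorb the polynomial factor $t^{-n/2}$ (trivially for $t\geq 1$, and via $x^{\alpha}\leq ce^{\beta x}$ with $x=\left\Vert H\right\Vert^{2}/(4t)$ together with $\left\Vert H\right\Vert \geq 2$ on the support of $1-\psi$ for the $k_{\sigma}^{0,\infty}$ piece), and then minimize the exponent $at+(1-\epsilon)\left\Vert H\right\Vert^{2}/(4t)$ by AM--GM, which gives exactly the exponent $(1-\epsilon)\left\Vert H\right\Vert\sqrt{\left\Vert \rho\right\Vert^{2}-\sigma/(1-\epsilon)}$ as in the paper.

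One caveat about your anticipated ``obstacle'': the worry about whether the optimizer $t_{\ast}$ lies in the admissible range is unnecessary, because you are bounding a supremum from \emph{above}. The inequality $at+(1-\epsilon)\left\Vert H\right\Vert^{2}/(4t)\geq (1-\epsilon)\left\Vert H\right\Vert\sqrt{\left\Vert \rho\right\Vert^{2}-\sigma/(1-\epsilon)}$ holds for \emph{every} $t>0$, so restricting to $t\geq 1$ or to $t<1$ costs nothing, and the paper accordingly does no case-splitting. Your proposed fallback for bounded $\left\Vert H\right\Vert$ via Lemma \ref{Step 0 estimates} would in fact fail for $\sigma>0$, since $\sup_{t\geq 1}e^{\sigma t}t^{-n/2}(1+t)^{m}=\infty$: even for bounded $\left\Vert H\right\Vert$ one needs the exponential decay $e^{-(1-\epsilon)\left\Vert \rho\right\Vert^{2}t}$ from Theorem \ref{Main result} together with $\sigma<(1-\epsilon)\left\Vert \rho\right\Vert^{2}$ to beat $e^{\sigma t}$. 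Likewise, the claim that $\left\Vert H\right\Vert$ bounded away from $0$ keeps $t_{\ast}$ inside $(0,1)$ is backwards, as $t_{\ast}$ grows linearly in $\left\Vert H\right\Vert$. Since these detours are superfluous once the unconstrained AM--GM bound is invoked, your core argument is correct and coincides with the paper's; your additional observation that the hypothesis on $\sigma$ in Theorem \ref{maximal} forces $\sigma<\left\Vert \rho\right\Vert^{2}$, so the square root is real for $\epsilon$ small, is a point the paper leaves implicit and is worth making explicit.
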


According to Kunze-Stein phenomenon \eqref{kunzestein} for locally symmetric spaces, it holds
	\begin{equation}
	\Vert H_{\sigma }^{\infty }\Vert _{L^{p}(M)\rightarrow L^{p}(M)}\leq
	c\int_{G}|\kappa _{\sigma }^{\infty }(g)|\phi _{-i\eta _{\Gamma }}(g)^{s(p)}{%
		dg},  \label{KS section 5}
	\end{equation} Thus, to prove Theorem \ref{maximal}, it is enough to show that the integral in (\ref{KS section 5}) converges.

	Using the estimates of ${k}_{\sigma }^{\infty }$ obtained in Lemma \ref%
	{global_lemma} and (\ref{spherical estimate}), we get that 
	\begin{align}
	& \int_{G}|{k}_{\sigma }^{\infty }(g)|\phi _{-i\eta _{\Gamma }}(g)^{s(p)}dg 
	\notag \\
	& \leq c\int_{\overline{\mathfrak{a}^{+}}}e^{-\left( 1-\epsilon \right)
		\left( \langle \rho, H \rangle+\sqrt{\left\Vert \rho \right\Vert ^{2}-\frac{\sigma }{%
				1-\epsilon }}\left\Vert H\right\Vert \right) }e^{s(p)(\langle\eta _{\Gamma
		},H\rangle-\langle \rho, H \rangle)}e^{2\langle \rho, H \rangle}{dH}  \notag \\
	& \leq c\underset{\overline{\mathfrak{a}^{+}}}{\int }e^{(1+\epsilon
		-s(p))\left\Vert \rho \right\Vert \left\Vert H\right\Vert +s(p)\left\Vert
		\eta _{\Gamma }\right\Vert \left\Vert H\right\Vert }e^{-(1-\epsilon
		)\left\Vert H\right\Vert \sqrt{\left\Vert \rho \right\Vert ^{2}-\frac{\sigma 
			}{1-\epsilon }}}{dH}.  \label{intoo}
	\end{align}%
	The integral above converges provided that 
	\begin{equation}
	(1+\epsilon -s(p))\left\Vert \rho \right\Vert +s(p)\left\Vert \eta _{\Gamma
	}\right\Vert -(1-\epsilon )\sqrt{\left\Vert \rho \right\Vert ^{2}-\frac{%
			\sigma }{1-\epsilon }}<0.  \label{sigmaepsilon}
	\end{equation}%
	Choosing $\epsilon $ small enough, it follows from (\ref{sigmaepsilon}) that
	the integral in (\ref{intoo}) converges when 
	\begin{equation}
	\sigma <s(p)(\left\Vert \rho \right\Vert -\left\Vert \eta _{\Gamma
	}\right\Vert )(2\left\Vert \rho \right\Vert -s(p)(\left\Vert \rho
	\right\Vert -\left\Vert \eta _{\Gamma }\right\Vert )).  \label{sigma}
	\end{equation}%
	Thus, $H_{\sigma}^{\infty }$ is bounded on $L^{p}(M),$ $p\in (1,\infty ),$
	if (\ref{sigma}) holds true.

	Next, it is left to show that the component $H_{\sigma}^{0}$ is bounded on $L^{p}(M),$ $p\in (1,\infty ).$ We split the operator $H_{\sigma}^{0}$ into two parts using a smooth cut-off function $\psi \in C_{c}^{\infty }(K\backslash G/K)$, with $\psi \equiv 1$ near the origin and $\psi \equiv 0$ in $B\left( 0,2\right) ^{c}$. Then, let ${H}_{\sigma }^{0,0}$ and ${H}_{\sigma}^{0,\infty}$ be the part of the operator associated to $\ast(\psi h_t)$ and  $\ast(1-\psi)h_t$, respectively. 
	
	We observe that the operator $H_{\sigma
}^{0,\infty }$ can be handled like $H_{\sigma}^{\infty }$, and the term ${H}_{\sigma }^{0,0}$ can be handled as in the Euclidean case (see for example \cite[p.278]{AN1}). In particular, Anker \cite[p. 278]{AN1} proves that $H_{\sigma }^{0,0}$
is bounded on $L^{p}(X)$ for all $p\in (1,\infty )$, by controlling $H_{\sigma }^{0,0}$ by a convolution operator that fits in singular integral theory. The same
arguments give the continuity of $H_{\sigma }^{0,0}$ on $L^{p}(G).$ Proceeding as in \cite[Proposition 13]{LOMAann}, we obtain the $L^p$ boundedness of $H_{\sigma }^{0,0}$ on $M$.

\begin{remark}
	Consider the Poisson operator $P_{t}=e^{-t(-\Delta )^{1/2}},$ whose kernel
	is given by 
	\begin{equation*}
	p_{t}=\frac{1}{2\sqrt{\pi }}\int_{0}^{\infty }u^{-3/2}e^{-t^{2}/4u}h_{u}du,
	\end{equation*}%
	(see \cite{AN1} for more details).
	
	Define the corresponding 
	Littlewood-Paley-Stein operators. Then, in a similar way, one can prove that
	these operators are bounded on $L^{p}(M),$ for $M$ as in Theorem \ref{sigmacondition0}, provided that 
	\begin{equation}
	\sigma <\sqrt{s(p)\left( \left\Vert \rho \right\Vert -\left\Vert \eta
		_{\Gamma }\right\Vert \right) \left( 2\left\Vert \rho \right\Vert
		-s(p)\left( \left\Vert \rho \right\Vert -\left\Vert \eta _{\Gamma
		}\right\Vert \right) \right) }.  \label{condition1}
	\end{equation}%
	If $\eta _{\Gamma }=0$ then the condition (\ref{condition1}) on $\sigma $
	becomes $\sigma <2\left\Vert \rho \right\Vert /\sqrt{pp^{\prime }},$ thus we
	recover the result of Anker in \cite{AN1}.
\end{remark}

\subsubsection{Proof of Theorem \protect\ref{Delta}}

In this section we prove Theorem \ref{Delta}, which gives estimates of the
norm
\begin{equation*}
\Vert \Delta _{M}e^{-t\Delta _{M}}\Vert _{L^{p}(M)\rightarrow L^{p}(M)}.
\end{equation*}

From Theorem \ref{Main result}, it follows that for sufficiently small
$\epsilon >0$, there exists $c>0$ such that 
\begin{equation}
\left\vert \frac{\partial h_{t}}{\partial t}\left( \exp H\right) \right\vert
\leq ce^{-\epsilon t}e^{-(1-\epsilon )(\langle \rho ,H\rangle )+\sqrt{%
		\left\Vert \rho \right\Vert ^{2}-\epsilon }\left\Vert H\right\Vert )},\text{
	for }t\geq 1,\text{ }H\in \overline{\mathfrak{a}_{+}}.  \label{global_lemma2}
\end{equation}

Thus, for $t\geq 1,$ proceeding as in the proof of Theorem \ref{maximal}, the
estimate (\ref{global_lemma2}) and the Kunze-Stein phenomenon, imply that 
\begin{equation}
\Vert S_{t}\Vert _{L^{p}(M)\rightarrow L^{p}(M)}\leq ce^{-\epsilon t}\int_{%
	\overline{\mathfrak{a}^{+}}}e^{\left( (1-s(p)+\epsilon )\left\Vert \rho
	\right\Vert -(1-\epsilon )\sqrt{\left\Vert \rho \right\Vert ^{2}-\epsilon }+{%
		s(p)\left\Vert \eta _{\Gamma }\right\Vert }\right) \left\Vert H\right\Vert
}dH.  \label{integralconv}
\end{equation}%
The integral above converges whenever 
\begin{equation*}
s(p)(\left\Vert \eta _{\Gamma }\right\Vert -\left\Vert \rho \right\Vert
)+\epsilon (\left\Vert \rho \right\Vert +\sqrt{\left\Vert \rho \right\Vert
	^{2}-\epsilon })+(\left\Vert \rho \right\Vert -\sqrt{\left\Vert \rho
	\right\Vert ^{2}-\epsilon })<0,
\end{equation*}
which holds true for sufficiently small $\epsilon >0$, since $\left\Vert \eta
_{\Gamma }\right\Vert <\left\Vert \rho \right\Vert .$

Furthermore, from (\ref{integralconv}) we get that 
\begin{equation*}
\Vert S_{t}\Vert _{L^{p}(M)\rightarrow L^{p}(M)}\leq ce^{-\epsilon t},\text{
	for all }t\geq 1.
\end{equation*}

Next, it is left to show that for $t<1$ the operator $S_{t}$ is bounded on $L^{p}(M),$ $p\in [1,\infty].$ We split the operator $S_{t}$ into two parts using a smooth cut-off function $\psi \in C_{c}^{\infty }(K\backslash G/K)$, with $\psi \equiv 1$ near the origin and $\psi \equiv 0$ in $B\left( 0,2\right) ^{c}$. Let ${S}_{t}^{0}=\ast(\psi \frac{\partial h_{t}}{\partial t})$ and ${S}_{t}^{\infty}=\ast(1-\psi)\frac{\partial h_{t}}{\partial t}.$ 

First, note that the operator $S_{t}^{\infty }$ can be handled like $S_{t}$ for $t>1$. Next, it can be shown that the kernel of ${S}_{t}^{0}$ is in $L^1(M)$, using a summation argument and working as in the Euclidean case. Indeed, we have 
	\[
	S_t^0f(x)=\int_X (\psi \frac{\partial h_t}{\partial t})(x,y)f(y)dy=\int_{\Gamma \backslash X} \sum_{\gamma \in \Gamma } (\psi \frac{\partial h_t}{\partial t})(x,\gamma y)f(\gamma y)d\tilde{y}.
	\]
	Using Theorem \ref{Main result} and the fact that $\psi$ is supported around the origin,
	\begin{align*}
	\sum_{\gamma \in \Gamma } (\psi \frac{\partial h_t}{\partial t})(x,\gamma y)&\leq c\sum_{\gamma \in \Gamma:\; d(x,\gamma y)\leq 1 }t^{-n/2-1}e^{-(1-\epsilon)d^2(x,\gamma y)/(4t)}\\
	&\leq c_{\Gamma}t^{-n/2-1}e^{-(1-\epsilon)d_M^2(\tilde{x},\tilde{y})/(4t)},
	\end{align*}
	since the last sum is finite.

\end{document}